\newtheorem{theorem}{Theorem}[section]
\newtheorem{lemma}{Lemma}[section]
\newtheorem{corollary}{Corollary}[section]
\newtheorem{remark}{Remark}[section]
\newtheorem{definition}{Definition}[section]
\newtheorem{proposition}{Proposition}[section]
\newtheorem{example}{Example}[section]
\newtheorem{assumption}{Assumption}[section]
\numberwithin{equation}{section}
\newcommand{\bth}{\begin{theorem}}
\newcommand{\ethe}{\end{theorem}}
\newcommand{\bre}{\begin{remark}}
\newcommand{\ere}{\end{remark}}
\newcommand{\ble}{\begin{lemma}}
\newcommand{\ele}{\end{lemma}}
\newcommand{\bde}{\begin{definition}}
\newcommand{\ede}{\end{definition}}
\newcommand{\bco}{\begin{corollary}}
\newcommand{\eco}{\end{corollary}}
\newcommand{\bpr}{\begin{proposition}}
\newcommand{\epr}{\end{proposition}}
\newcommand{\bexer}{\begin{exercise}}
\newcommand{\eexer}{\end{exercise}}
\newcommand{\breh}{\begin{hint}}
\newcommand{\ereh}{\end{hint}}
\newcommand{\halmos}{\hfill \qed}
\newcommand{\bexam}{\begin{example}}
\newcommand{\eexam}{\end{example}}
\newcommand{\pr} {{\bf Proof.}}
\newcommand{\bfi}{\begin{fig}}
\newcommand{\efi}{\end{fig}}
\newcommand{\beao}{\begin{eqnarray*}}
\newcommand{\eeao}{\end{eqnarray*}\noindent}
\newcommand{\beam}{\begin{eqnarray}}
\newcommand{\eeam}{\end{eqnarray}\noindent}
\newcommand{\xto}{x\to\infty}
\newcommand{\bF}{\overline{F}}
\newcommand{\bV}{\overline{V}}
\newcommand{\bG}{\overline{G}}
\newcommand{\bbr}{{\mathbb R}}
\newcommand{\bbn}{{\mathbb N}}
\newcommand{\E}{\mathbf{E}}
\newcommand{\PP}{\mathbf{P}}
\newcommand{\vep}{\varepsilon}
\begin{document}
\title[Background risk model in presence of heavy tails under dependence]{Background risk model in presence of heavy tails under dependence}

\author[D.G. Konstantinides, C. D. Passalidis]{Dimitrios G. Konstantinides, Charalampos  D. Passalidis}

\address{Dept. of Statistics and Actuarial-Financial Mathematics,
University of the Aegean,
Karlovassi, GR-83 200 Samos, Greece}
\email{konstant@aegean.gr,\;sasm23002@sas.aegean.gr.}

\date{{\small \today}}

\begin{abstract}
In this paper, we examine two problems on applied probability, which are directly connected with the dependence in presence of  heavy tails. The first problem, is related to max-sum equivalence of the randomly weighted sums in bi-variate set up. Introducing a new dependence, called Generalized Tail Asymptotic Independence, we establish the bi-variate max-sum equivalence, under a rather general dependence structure, when the primary random variables follow distributions from the intersection of the dominatedly varying and the long tailed distributions. On base of this max-sum equivalence, we provide a result about the asymptotic behavior of two kinds of ruin probabilities, over a finite time horizon, in a bi-variate renewal risk model, with constant interest rate. The second problem, is related to the asymptotic behavior of the Tail Distortion Risk Measure, in a static portfolio, called Background Risk Model. In opposite to other approaches on this topic, we use a general enough assumption, that is based on multivariate regular variation.
\end{abstract}

\maketitle
\textit{Keywords:} Joint tail behavior; Randomly weighted sums; Tail distortion risk measure; Bi-variate renewal risk model; Interdependence;  Multivariate regular variation\\[3mm]
\textit{Mathematics Subject Classification}: Primary 62P05; Secondary 91G05; 91B05.

\section{Introduction}

\subsection{Concepts and motivation.}

In last decades, the distributions with heavy tails play a crucial role in applied probability, and especially in risk theory and risk management, see \cite{chen:liu:2022}, \cite{leipus:siaulys:2024}, \cite{leipus:siaulys:konstantinides:2023}, \cite{li:2013}, \cite{li:tang:2015}, \cite{liu:yang:2021}, \cite{spindys:siaulys:2020}, etc.  

Simultaneously, the dependence modeling among risks, seems to have equally important impact on insurance applications, while keeps the mathematical interest with respect to generalizations either of some independent results, or of some counterexamples in which the independent results can not be generalized. Hence, we observe that the study of dependent models in combination with the presence of heavy tails, present a useful tool in applications and the same time a strong mathematical support.

In this paper we shall explore the concept to interdependence, in the sense of a complex dependence between two sequences of random variables, whose distributions are from the heavy tail class. In order to make it clear, we depict in two ways the interdependence, that are used in this paper and in the frame of the new results.

At first, we understand the interdependence, as structure between two finite sequences of primary random variables, heavy-tailed distributed, $X_1,\,\ldots,\,X_n$ and $Y_1,\,\ldots,\,Y_m$, such that each of them contains dependent components, but simultaneously the two sequences are also dependent each other. In this sense, we introduce a new dependence structure, called Generalized Tail Asymptotic Independent, symbollically $GTAI$, in the Definition \ref{def.KP.6} below, and it belongs to the family of second order asymptotic independence. Next, in section 2, we establish the max-sum equivalence of randomly weighted sums, in bi-variate framework, with random weights $\Theta_1,\,\ldots,\,\Theta_n$, $\Delta_1,\,\ldots,\,\Delta_m$, bounded from above, non-negative and non-degenerated to zero, that are arbitrarily dependent each other, and independent of $X_1,\,\ldots,\,X_n$ and $Y_1,\,\ldots,\,Y_m$, under dominatedly varying and long tailed distributions for the primary variables. Namely we establish the asymptotic relation
\beam \label{eq.KP.1.1}
\PP\left[\sum_{i=1}^{n}\Theta_i\,X_i>x\,, \;\sum_{j=1}^{m}\Delta_j\,Y_j>y\right]\sim \sum_{i=1}^{n}\sum_{j=1}^{m}\PP[\Theta_i\,X_i>x,\;\Delta_j\,Y_j>y]\,,
\eeam 
as $x\wedge y \rightarrow \infty$. The study of relation \eqref{eq.KP.1.1}, through this kind of interdependence covers a gap in the literature, and it includes many of the already existing results, as it is discussed in Section 2. Next, the establishment of the asymptotic equivalence in \eqref{eq.KP.1.1}, helps in Section 3, to provide asymptotic expressions for two types of  ruin probability, over finite time horizon, in the frame of bi-variate, continuous time risk model, with constant interest force, and a common renewal counting process for the two lines of business.

In Section 4, we meet a second type of interdependence. Since, in this section we focus on asymptotic behavior of a risk measure, called Tail Distortion Risk Measure, symbolically $TDRM$, with respect to a model, known as Backgroung Risk Model, direct role play the quantities of the form 
\beam  \label{eq.KP.1.2}
\sum_{i=1}^n \Theta_i\,X_i\,,
\eeam 
called randomly weighted sums. The interdependence now, is expressed by the dependence among the components of vectors  ${\bf \Theta}=(\Theta_1,\,\ldots,\,\Theta_n)$, and  ${\bf X}=(X_1,\,\ldots,\,X_n)$, and simultaneously between the vectors ${\bf \Theta}$ and ${\bf X}$. Such dependence structures were studied in the literature under the framework of multivariate regular variation for the distribution of ${\bf X}$. However, in larger classes of heavy tailed distributions for the components of ${\bf X}$, the interdependence effect appears rarely. The reason is, the difficulty to find max-sum equivalence for the randomly weighted sums, namely 
\beam \label{eq.KP.1.3}
\PP\left[\sum_{i=1}^{n}\Theta_i\,X_i>x\right]\sim \sum_{i=1}^{n} \PP[\Theta_i\,X_i>x]\,,
\eeam 
as $x\wedge y \rightarrow \infty$. We refer to \cite{chen:yuen:2009}, \cite{li:2013}, \cite{tang:yuan:2014}, \cite{Yang:Leipus:Siaulys:2012}, \cite{yang:wang:leipus:siaulys:2013} for papers that studied relation \eqref{eq.KP.1.3}, under a variety of dependence structures and several classes of heavy-tailed distributions. In the paper \cite{chen:cheng:2024}, was for first time established   relation \eqref{eq.KP.1.3}, through interdependence for the case $n=2$, in distribution classes larger than regular variation.

We remain in the frame of regular variation for the $X_1,\,\ldots,\,X_n$, but under the relaxed assumption that ${\bf \Theta}{\bf X}$ follows a multivariate regular varying distribution, we allow a wide spectrum of dependencies among the components of the sum in relation \eqref{eq.KP.1.2}. Furthermore, our assumption for the products is easily verified, when ${\bf X}$ follows multivariate regularly varying distribution, through several applications of Breiman's theorem on multivariate set up. 

Finally, in the last section, we study the asymptotic behavior of the $TDRM$, in a Backgroung Risk Model, which satisfies the general assumption that ${\bf \Theta}{\bf X}$ follows $MRV$.

\subsection{Notations.}

In this subsection, after some necessary notations, we introduce the preliminary material for the heavy-tailed distribution classes. Let denote ${\bf x}=(x_1,\,\ldots,\,x_n)$, the scalar product $c\,{\bf x}=(c\,x_1,\,\ldots,\,c\,x_n)$, $x\wedge y=\min\{x,\,y\}$, $x\vee y=\max\{x,\,y\}$, $x^+=x\vee 0$, $x-=(-x\vee 0)$, $\left\lfloor x \right\rfloor$ the integer part of $x$, and $e_{i}$ represents the vector, whose all the components are $0$, except the $i$-th, that is $1$. With ${\bf 1}_{\{A\}}$ we denote the indicator function on the set $A$. For two positive functions $f$ and $g$ we write $f(x)=O(g(x))$, as $x\to \infty$, if 
\beao 
\limsup_{x\rightarrow\infty}\frac{f(x)}{g(x)}<\infty\,.
\eeao 
and $f(x)=o(g(x))$, as $x\to \infty$, if 
\beao
	\lim_{x\rightarrow\infty}\frac{f(x)}{g(x)}=0\,,
\eeao
we write $f(x) \sim c\,g(x)$, as $\xto$, for some $c \in (0,\,\infty)$, if
\beao
\lim_{\xto} \dfrac{f(x)}{g(x)} =c\,.
\eeao 

We write $f(x)\asymp g(x)$, if it holds both $f(x)=O(g(x))$ and $g(x)=O(f(x))$ as $x\to \infty$. All the previous asymptotic notations hold for $x\wedge y \rightarrow \infty$, when we have positive, bi-variate functions. For example we write $f(x,\,y) \sim c\,g(x,\,y)$, as  $x\wedge y \rightarrow \infty$, with $c\in (0,\,\infty)$, if 
\beao
\lim_{x\wedge y \rightarrow\infty}\frac{f(x,\,y)}{g(x,\,y)}=c\,.
\eeao
Let denote by $V(x)=\PP(Z\leq x)$ the distribution of the random variable $Z$ and by 
$\bV(x):=1-V(x)=\PP(Z>x)$, its tail. 

Let see now the classes of heavy-tailed distributions and their properties. We assume that all the distributions have infinite right endpoint, that means $\bV(x)>0$ for all $x>0$. We say that distribution $V$ has heavy tail, and write $V\in \mathcal{H}$, if for any $\epsilon >0$ the relation
\beao
	\int_{- \infty}^{\infty}e^{\epsilon x}\,V(dx)= \infty\,,
\eeao
is true. 

We say that distribution $V$ has long tail (symbolically, $V\in \mathcal{L}$) if for some (or equivalently, for all) $a > 0$ holds 
\beao
	\lim_{x\rightarrow\infty}\dfrac{\bV(x-a)}{\bV(x)}=1\,.
\eeao
The class $\mathcal{L}$ represents a subclass of heavy-tailed distributions $\mathcal{L} \subset \mathcal{H}$. If $V\in \mathcal{L}$, then there exists some function $a:[0,\infty)\rightarrow [0,\infty) $ such that $a(x)\rightarrow\infty$, $a(x)=o(x)$, and $ \bV(x\pm a(x))\sim\bV(x)$, as $x\rightarrow\infty$. This $a(\cdot)$ is called insensitivity function. 

We say that distribution $V$ belongs to the class of subexponential distributions, and write $V\in\mathcal{S}$, if for some (or equivalently for any) $n=2,\,3,\,\ldots$ it holds 
\beao
\lim_{\xto} \dfrac{\bV^{n*}(x)}{\bV(x)} = n\,,
\eeao
where $V^{n*}$ is the n-fold convolution of distribution $V$ with itself. The classes $\mathcal{S},\,\mathcal{L},\,\mathcal{H}$ were introduced in \cite{chistyakov:1964}.

We say that $V$ has dominatedly varying tail, and write $V\in \mathcal{D}$, if holds
\beao
	\limsup_{x\rightarrow\infty}\dfrac{\bV(t\,x)}{\bV(x)}< \infty
\eeao 
for all (or equivalently, for some) $0<t<1$. Let us make clear, that $\mathcal{D}\subsetneq \mathcal{H}$ and $\mathcal{D}\not\subseteq \mathcal{S}$, $\mathcal{S}\not\subseteq \mathcal{D}$. However $\mathcal{D}\cap \mathcal{L} \equiv \mathcal{D}\cap \mathcal{S} \subset \mathcal{S}$.
Now, we remind some properties of regular variation. A random variable $Z$ with distribution $V$ is regularly varying with index $\alpha>0$ and write $V\in \mathcal{R}_{-\alpha}$ if it holds
\beao
\lim_{\xto}	\dfrac{\bV(tx)}{\bV(x)} = t^{-\alpha}\,,
\eeao
for any $t>0$. The class of distributions with regularly varying tails is contained in $\mathcal{D}\cap \mathcal{L}$ and further the following inclusion is true
\beao
\mathcal{R}:=\bigcup_{\alpha>0} \mathcal{R}_{-\alpha} \subset \mathcal{D}\cap \mathcal{L}\subset\mathcal{S}\subset\mathcal{L}\subset \mathcal{H}\,,
\eeao
see for example in \cite[Rem. 2.1, p. 21]{leipus:siaulys:konstantinides:2023}. Let us now consider the limits
\beao
\bV_{*}(t):= \liminf_{x\rightarrow\infty}\dfrac{\bV(tx)}{\bV(x)}\,, \qquad \bV^{*}(t):= \limsup_{x\rightarrow\infty}\dfrac{\bV(tx)}{\bV(x)}\,,
\eeao
for all $t>1$.

For some distribution $V$ the upper and lower Matuszewska indexes are  given by
\beao
J_{V}^+:= \inf\left\{-\dfrac{\log\bV_{*}(t)}{\log t} : t>1 \right\}\,,\qquad J_{V}^-:=\sup\left\{-\dfrac{\log\bV^{*}(t)}{\log t} : t>1\right\}\,.
\eeao
respectively. For these indexes, that appeared in \cite{matuszewska:1964}, the following relations hold. $V\in \mathcal{D}$ if and only if $0\leq J_{V}^-\leq J_{V}^+<\infty$ and if $V\in \mathcal{R}_{-\alpha}$, then $J_{V}^-=J_{V}^+=\alpha$, (see \cite{bingham:goldie:teugels:1987}, or \cite[Sec. 2.4]{leipus:siaulys:konstantinides:2023}).

The class of regularly varying distributions have many closure properties, see for example \cite{leipus:siaulys:konstantinides:2023}. One of this properties is the asymptotic behavior of the tail of product convolution, which is the popular Breiman's Theorem.  In the \cite{breiman}, and \cite{cline:samorodnitsky:1994} established the following result. 

If $Z$ and $\Theta$ two independent random variables with distribution of $Z$ from class $\mathcal{R}_{-a}$, for some $\alpha>0$, and $\Theta$ is non-negative, non-degenerated to zero, such that $\E[\Theta^{\alpha+\epsilon}]<\infty$ for some $\epsilon>0$ then 
\beao
\PP(\Theta\,Z>x)\sim \E(\Theta^{\alpha})\PP(Z>x)\,,
\eeao 
as $\xto$, which further means that the distribution of the product $\Theta\,Z$belongs to $\mathcal{R}_{-a}$. 

Now we can go to the extension of regular variation in random vectors. Let ${\bf X}$ be a random vector in the space $[0,\infty]^{n}$. We remind that  ${\bf X}$ follows the multivariate regularly varying distribution, symbolically $MRV$, if there exists a function $b\;:\;\mathbb{R}_+\rightarrow \mathbb{R}_+$ and a non-degenerated to zero, Radon measure $\mu$, such that for every $\mu$-continuous Borel, namely with $\mu(\partial B)=0$ where $\partial B$ represents the border of $B$, set  $B  \subseteq[0,\infty]^{n}\setminus \{{\bf 0}\}$, it holds
\beam \label{eq.KP.3}
\lim_{\xto} x\,\PP\left[\dfrac{{\bf X}}{b(x)}\in B \right] = \mu(B)\,,
\eeam
and we write ${\bf X}\in MRV(\alpha,\,b,\,\mu)$. This measure $\mu$ is homogeneous, namely for any Borel set $B \subseteq[0,\infty]^{n}\setminus\{ {\bf 0}\}$ we obtain 
\beao
\mu(tB)=t^{-\alpha}\mu(B)\,,
\eeao 
for any $t > 0$.

For the normalizing function $b(\cdot)$ we have that $b(\cdot)\in \mathcal{R}_{1/\alpha}$, as is indicated in \cite{resnick:2007}. Another representation of \eqref{eq.KP.3}  is in the following form 
\beao
\lim_{\xto} \dfrac{1}{\bV(x)}\PP\left[ \dfrac{{\bf X}}{x}\in B \right] = \mu(B)
\eeao
for a distribution $V \in \mathcal{R}_{-a}$. 

$MRV$ is a well-known multivariate distribution class with rich properties. We refer to \cite{resnick:2007} for several treatments and to \cite{chen:liu:2024}, \cite{cheng:konstantinides:wang:2022}, \cite{konstantinides:li:2016}, \cite{li:2016}, \cite{li:2022} for applications on risk theory and risk management.

It is worth to mention that recently there were some attempts to extend the heavy-tailed distributions to multivariate set up, see for example \cite{konstantinides:passalidis:2024b}, \cite{konstantinides:passalidis:2024g}, \cite{samorodnitsky:sun:2016} for such kind of approaches and survey of classes.

Now let us remind the strong asymptotic independence, that we need later, (see  \cite[Assumption A]{li:2018b}). We should notice that, the convergence in the next dependence structure was defined in the general case, as $(x,\,y) \rightarrow (\infty,\,\infty)$ in \cite{li:2018b}, but for sake of compactness of the text we use here the convergence $x \wedge y \rightarrow \infty$. Let $X$ and $Y$ two real-valued random variables with distributions $F$ and $G$ respectively. We say that $X$ and $Y$ are strongly asymptotically independent, symbolically $SAI$, if 
\beao
	\PP(X^{-}>x,\;Y>y)&=&O[F(-x)\,\bG(y)] \,,\quad  \PP(X>x,\;Y^{-}>y)=O[\bF(x)\,G(-y)]\,,\\[2mm]
	\PP(X>x,\;Y>y)&\sim& C\,\bF(x)\,\bG(y)\,,
\eeao 
as $x \wedge y \rightarrow \infty$, for some constant $C>0$.

\begin{remark} \label{rem.KP.2}
	It is easy to see that, SAI contains the independence as a special case.
	In the case where $X$ and $Y$ are non-negative (or generally, bounded from below) as in our case, then $X$ and $Y$ are SAI if holds
\beao
\PP(X>x,\;Y>y)\sim C\bF(x)\,\bG(y)
\eeao	
as $x \wedge y \rightarrow \infty$. The SAI covers a wide spectrum of dependence as for example Ali-Mikhail-Haq, Farlie-Gumbel-Morgenstern and Frank copulas, see \cite{li:2018b}. 
\end{remark}

\section{ Generalized tail asymptotic independence}

Next we establish the following relation
\beam \label{eq.KP.44}
	\PP(X_n(\Theta)>x,\;Y_m(\Delta)>y)\sim\sum_{i=1}^{n}\sum_{j=1}^{m}\PP(\Theta_i\,X_i>x,\;\Delta_j\,Y_j>y)\,,
\eeam
as $x\wedge y \rightarrow \infty$, where
\beao
	X_n(\Theta):=\sum_{i=1}^{n}\Theta_i\,X_i\,, \qquad Y_m(\Delta):=\sum_{j=1}^{m}\Delta_j\,Y_j\,.
\eeao
with $\{\Theta_i,\,\Delta_j,\; i,\,j \in \bbn\}$ arbitrarily dependent non-negative random variables, called random weights, and the primary random variables $ \{(X_i,\,Y_i),\; i\in \bbn\}$ are such that $X_i$ and $Y_i$ are SAI (but $X_i$ and $Y_j$ are independent for any $i\neq j$),  with $\PP(X_i>x)= \bF_i(x) \in \mathcal{D}\cap \mathcal{L}$ and $\PP(Y_j>x)= \bG_j(x) \in \mathcal{D}\cap \mathcal{L}$. 

The study of the joint behavior of the tail of two randomly weighted sums, provides a realistic framework for the insurance applications, since in most insurance companies are running several portfolios, which are subject to dependence environment, see for relation \eqref{eq.KP.44} under several heavy-tailed distribution classes and several dependence structures in \cite{chen:yang:2019}, \cite{li:2018}, \cite{shen:du:2023}, \cite{yang:chen:yuen:2024}. Indeed, we find mostly two forms of dependence structure. Firstly, the $\{\Theta_i,\,\Delta_j\}$ are arbitrarily dependent, while the $\{(X_i,\,Y_i) \}$ are independent random vectors, within each random vector appears some dependence structure. Secondly, the $\{\Theta_i,\,\Delta_j\}$ are arbitrarily dependent, and in each sequence $\{X_i \}$ and $\{Y_i \}$ appears some dependence structure, while the two sequences $\{X_i \}$ and $\{Y_i \}$ are independent. In this paper we combine these two approaches, through the following definition. In next definition, we use the random variables  $X_{1},\,\ldots,\,X_{n}$ and $Y_{1},\,\ldots,\,Y_{m}$, that follow distributions with supports, which are not bounded from above.

\begin{definition} \label{def.KP.6}
	Let $X_{1},\,\ldots,\,X_{n}$ and $Y_{1},\,\ldots,\,Y_{m}$ real valued random variables. Then we say that $X_{1},\,\ldots,\,X_{n},\,Y_{1},\,\ldots,\,Y_{m}$ are generalized tail asymptotic independent, symbolically GTAI, if both following relations hold 
	\beam \label{eq.KP.45}
		\lim_{{x_{i}\wedge x_{k} \wedge y_{j}}\rightarrow\infty}\PP(|X_{i}|>x_{i}\; \mid \;X_{k}>x_{k},\; Y_{j}>y_{j})=0\,,
	\eeam
	for all $1\leq k\neq i \leq n$, $j=1,\,\ldots,\,m$ and 
	\beam \label{eq.KP.46}
		\lim_{{x_{i}\wedge y_{j} \wedge y_{k}}\rightarrow\infty}\PP(|Y_{j}|>y_{j} \;\mid\; X_{i}>x_{i},\; Y_{k}>y_{k})=0\,,
	\eeam
	for all $1\leq j \neq k \leq m$, $i=1,\,\ldots,\,n$.
\end{definition}

\begin{remark} \label{rem.KP.5} 
	This dependence structure allows dependence between $X_{1},\,\ldots,\,X_{n}$, between $Y_{1},\,\ldots,\,Y_{m}$ and dependence between $X_{i}$ and $Y_{j}$ (not only for $i=j$). In this paper we restrict ourselves in the case with $X_{i},\,Y_{i}$ to be SAI dependent for the same $i$, and $X_{i},\, Y_{j}$ independent for any $i\neq j$. We have to note that the $GTAI$ structure belongs to the family of 'second order asymptotic independnce', that means that the probability of three or more extremal events, is negligible with respect to the probability of two extremal events, namely one in each sequence.

Notice that if $X_{i}$ and $Y_{j}$ are independent  for any $i,j \in \bbn$ (i.e. the two sequences are independent) then the relationships \eqref{eq.KP.45}, \eqref{eq.KP.46}  can be written as follows
\beam \label{eq.KP.47}
	\lim_{x_i\wedge x_k\rightarrow\infty}\PP(|X_i|>x_i \;\mid\; X_k>x_k)=0
\eeam
holds for all $1\leq i\neq k\leq n$ and 
\beam \label{eq.KP.48}
	\lim_{y_j\wedge y_k\rightarrow\infty}\PP(|Y_j|>y_j \;\mid\; Y_k>y_k)=0
\eeam
for all $1\leq j\neq k\leq m$. Through \eqref{eq.KP.47}, \eqref{eq.KP.48} we obtain the definition of tail asymptotic independence of $X_{1},\,\ldots,\,X_{n}$ and $Y_{1},\,\ldots,\,Y_{m}$ respectively, introduced in \cite{geluk:tang:2009}. We wonder if the results of our paper can be identified using instead of $GTAI$ the $TAI$ over the $X_1,\,\ldots,\,X_n,\,Y_1\,\ldots,\,Y_m$, as we find under similar frame in \cite{chen:wang:wang:2013}. The reply is no, because, inspite of the presence of 'interdependence' in both cases, the $GTAI$ studies second order asymptotic independence events, while the $TAI$ studies only first order asymptotic independence events. Hence, in Theorem \ref{th.KP.4} below, we demonstrate the 'insensitivity' with respect to dependence in a more extremal event, in comparison with $TAI$ case.
\end{remark}

As follows from the last remark, if we choose two mutually independent sequences, where each one has tail asymptotic independent terms, then the structure $GTAI$ is satisfied. We present now two examples, which contains interdependence among the two sequences and also satisfies the $GTAI$ structure. For sake of simplicity, we restrict ourselves on non-negative random variables, with $n=m=2$.

\bexam \label{exam.KP.2.1}
Let $X_1,\,X_2,\,Y_1,\,Y_2$ be non-negative random variables and $Z_1,\,Z_2,\,Z_3,\,Z_4$, with $Z_i \in \{X_1,\,X_2,\,Y_1,\,Y_2\}$, where $Z_i\neq Z_j$, for any $1 \leq i \neq j \leq 4$. Let $z_1,\,z_2,\,z_3,\,z_4$, with  $z_i \in \{x_1,\,x_2,\,y_1,\,y_2\}$, and let permit $z_i=z_j$, for any $1 \leq i \neq j \leq 4$. We assume that the $Z_1,\,Z_2,\,Z_3,\,Z_4$ are widely upper orthant dependent, see \cite{wang:wang:gao:2013}, namely for any integer $n=1,\,\ldots,\,4$, there exists a positive number $g_u(n)$, such that for any $z_i \in \bbr$, with $i=1,\,\ldots,\,n$ it holds
\beao
\PP\left[\bigcap_{i=1}^n \{Z_i > z_i\} \right] \leq g_u(n)\,\prod_{i=1}^n \PP[Z_i>z_i]\,.
\eeao
From this relation with $n=3$, follows directly the $GTAI$ structure.
\eexam

Except the advantage to imply the $GTAI$ structure, in next example we get an idea about the dependence frames that satisfy the conditions of Theorem \ref{th.KP.4} below.

\bexam \label{exam.KP.2.2}
Under the notation of Example \ref{exam.KP.2.1}, we consider that the $Z_1,\,Z_2,\,Z_3,\,Z_4$ are $SAI$, for any two of them, namely, say for any $Z_i,\,Z_j$, where $i\neq j$, there exists a constant $C_{ij}>0$, such that $\PP[Z_i>z_i\,,\;Z_j>z_j] \sim C_{ij}\PP[Z_i>z_i]\,\PP[Z_j>z_j]$, as $z_i\wedge z_j \to \infty$, and further we consider that the are  $SAI$, for any three of them, namely, say for any $Z_i,\,Z_j,\,Z_k$, where $i\neq j\neq k$, there exists a constant $C_{ijk}>0$, such that 
\beao
\PP[Z_i>z_i\,,\;Z_j>z_j\,,\;Z_k>z_k] \sim C_{ijk}\PP[Z_i>z_i]\,\PP[Z_j>z_j]\,\PP[Z_k>z_k]\,,
\eeao 
as $z_i\wedge z_j\wedge z_k \to \infty$. From the triple $SAI$ structure, is directly implied the $GTAI$.
\eexam

Now we can give the first assumption for the main result of the section.
\begin{assumption} \label{ass.KP.2}
	We assume that the following random variables $X_{1},\,\ldots,\,X_{n},\,Y_{1},\,\ldots,\,Y_{m}$, are $GTAI$, and the random weights $\Theta_{1}\,,\ldots,\,\Theta_{n},\,\Delta_{1},\,\ldots,\,\Delta_{m}$, are non-negative and non-degenerated to zero random variables,  that follow distributions, whose supports are bounded from above, and are independent of  $X_{1},\,\ldots,X_{n},\,Y_{1},\,\ldots,\,Y_{m}$.
\end{assumption}

In next lemma, we see that under Assumption \ref{ass.KP.2} the $GTAI$ structure, remains invariant with respect to products.
\begin{lemma} \label{lem.KP.2.1}
Under the Assumption \ref{ass.KP.2}, we obtain that all the products $\Theta_{1}X_{1},\,\ldots,\,\Theta_{n}X_{n}$, $\Delta_{1}Y_{1},\,\ldots,\,\Delta_{m}Y_{m}$, are $GTAI$.
\end{lemma}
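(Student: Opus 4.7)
The plan is to transfer the GTAI condition from the primary variables to the products by conditioning on the weights and exploiting that each weight is essentially bounded above. It suffices to verify relation \eqref{eq.KP.45} for $\{\Theta_i X_i\}\cup\{\Delta_j Y_j\}$; \eqref{eq.KP.46} then follows by an entirely symmetric argument exchanging the roles of the $X$'s and $Y$'s.

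First I recast the primary GTAI property in $\vep$--$M$ form: for each $\vep>0$ there is $M=M(\vep)$ such that $u\wedge v\wedge w\ge M$ implies $\PP(|X_i|>u,X_k>v,Y_j>w)\le \vep\,\PP(X_k>v,Y_j>w)$. Let $b_i,b_k,d_j$ denote the essential suprema of $\Theta_i,\Theta_k,\Delta_j$ (finite by Assumption \ref{ass.KP.2}) and let $H$ denote the joint law of $(\Theta_i,\Theta_k,\Delta_j)$. Independence of the weights from the primaries combined with Fubini yields
\[
\PP(\Theta_i|X_i|>x_i,\,\Theta_kX_k>x_k,\,\Delta_jY_j>y_j)=\int\PP(|X_i|>x_i/\theta_i,\,X_k>x_k/\theta_k,\,Y_j>y_j/\delta_j)\,dH,
\]
with the integrand interpreted as $0$ whenever any coordinate of $(\theta_i,\theta_k,\delta_j)$ vanishes, since then the corresponding product is $0$ and the event is empty. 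Because the effective integration domain is contained in $(0,b_i]\times(0,b_k]\times(0,d_j]$, as soon as $x_i\ge Mb_i$, $x_k\ge Mb_k$, $y_j\ge Md_j$ the rescaled thresholds $x_i/\theta_i$, $x_k/\theta_k$, $y_j/\delta_j$ all exceed $M$ $H$-a.s., and the primary GTAI inequality applies pointwise. Integrating that pointwise bound against $H$, and then enlarging the integration region on the right by dropping the constraint $\theta_i>0$, I obtain
\[
\PP(\Theta_i|X_i|>x_i,\,\Theta_kX_k>x_k,\,\Delta_jY_j>y_j)\le \vep\,\PP(\Theta_kX_k>x_k,\,\Delta_jY_j>y_j).
\]
Hence the conditional probability is $\le\vep$ for all sufficiently large $x_i,x_k,y_j$, and since $\vep>0$ was arbitrary, \eqref{eq.KP.45} for the products follows.

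I do not foresee a real obstacle: the essential mechanism is that the uniform upper bound on each weight compresses the rescaled threshold $x/\theta$ back into the range $[M,\infty)$ where the primary GTAI bound is valid, so the boundedness hypothesis in Assumption \ref{ass.KP.2} is precisely the ingredient required to make the transfer work. The only mild technicality is the handling of weight-zero configurations, but on these the corresponding product vanishes, so the event being probed is empty and the relevant integrals are unaffected.
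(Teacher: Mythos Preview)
Your proof is correct and follows essentially the same approach as the paper: both rewrite the joint probability of the products as an integral over the weight distribution, use the upper bounds on the weights to ensure the rescaled thresholds lie in the range where the primary $\vep$--$M$ GTAI bound applies, and then integrate the pointwise inequality to obtain the product version. Your treatment of the zero-weight case and the explicit mention of enlarging the integration region on the right-hand side are minor clarifications rather than departures from the paper's argument.
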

\begin{proof}
By definition of $GTAI$, for any $\epsilon>0$, there exist some constant $\kappa_{0}>0$ such that for any $x_{i}\wedge x_{k}\wedge y_{j}>\kappa_{0}$, it holds the relation
$\PP(|X_{i}|>x_{i}\mid X_{k}>x_{k}, Y_{j}>y_{j})<\epsilon$, for all $1\leq i \neq k \leq n$, $j=1,\,\ldots,\,m$. 	
Let $\PP(\Theta_{i}\leq b_{i})=1$, $\PP(\Delta_{j}\leq d_{j})=1$, where $b_{i},\,d_{j}>0$, for all $i=1,...,n$ and $j=1,...,m$. Then for $x_{i},x_{k},y_{j}$ sufficiently large, namely 
\beao
\dfrac{x_{i}}{b_{i}}>\kappa_{0}\,,\quad \dfrac{x_{k}}{b_{k}}>\kappa_{0}\,, \quad \dfrac{y_{j}}{\lambda_{j}}>\kappa_{0}\,,
\eeao
we have that:
\beao
&&\PP(|\Theta_{i} X_{i}|>x_{i}, \Theta_{k}X_{k}>x_{k},\Delta_{j}Y_{j}>y_{j})\\[2mm]
&&=\int_{0}^{b_{i}}\int_{0}^{b_{k}}\int_{0}^{d_{j}}\PP\left[|X_{i}|>\frac{x_{i}}{c_{i}}, X_{k}>\frac{x_{k}}{c_{k}}, Y_{j}>\frac{y_{j}}{\lambda_{j}}\right]\PP(\Theta_{i}\in dc_{i}, \Theta_{k}\in dc_{k}, \Delta_{j}\in d\lambda_{j})\\[2mm]
&&=\int_{0}^{b_{i}}\int_{0}^{b_{k}}\int_{0}^{d_{j}}\PP\left[|X_{i}|>\frac{x_{i}}{c_{i}}\mid X_{k}>\frac{x_{k}}{c_{k}}, Y_{j}>\frac{y_{j}}{\lambda_{j}}\right]\PP\left[X_{k}>\frac{x_{k}}{c_{k}}, Y_{j}>\frac{y_{j}}{\lambda_{j}}\right]\\[2mm]
&& \qquad \times \PP(\Theta_{i}\in dc_{i}, \Theta_{k}\in dc_{k}, \Delta_{j}\in d\lambda_{j})\leq\epsilon \PP(\Theta_{k}X_{k}>x_{k}, \Delta_{j}Y_{j}>y_{j})
\eeao
by arbitrariness of $\epsilon>0$, we have the first relation of $GTAI$. The symmetrical relation, namely $	\PP(|\Delta_{j}Y_{j}|>y_{j}, \Theta_{i}X_{i}>x_{i}, \Delta_{k}Y_{k}>y_{k})\leq\epsilon \PP(\Theta_{i}X_{i}>x_{i}, \Delta_{k}Y_{k}>y_{k})$, for any $1\leq j \neq k \leq m$, $i=1,\,\ldots,\,n$, can be easily obtained through similar arguments. And this complete the proof.
\end{proof}	

\begin{assumption} \label{ass.KP.3}
	Let $ \{(X_i,\,Y_i),\;i\in \bbn \}$ be some sequence of random vectors with marginal distributions $F_i \in \mathcal{D}\cap \mathcal{L}$ and $G_i\in \mathcal{D}\cap \mathcal{L}$, respectively, for all $i\in \bbn$. Assume that $X_i$ and $Y_i$ are SAI for the same $i$, with constant $C_i>0$, and $X_{i},\,Y_{j}$ are independent for any $i\neq j$. 
\end{assumption}

The next lemma plays crucial role in the proof of Theorem \ref{th.KP.4}.
\begin{lemma} \label{lem.KP.1} 
	Under Assumption \ref{ass.KP.2} and Assumption  \ref{ass.KP.3}, we find
	\beao 
		\PP(\Theta_i\,X_i>x,\; \Delta_j\,Y_j>y,\; \Theta_k\,|X_k|>a(x))=o\left(\PP(\Theta_{i}\,X_{i}>x,\;\Delta_{j}\,Y_{j}>y)\right)\,,
	\eeao
as $x\wedge y \rightarrow \infty$, for any $1\leq i \neq k \leq n$ with $j=1,\,\ldots,\,m$, where $a(x)>0$, is such that $a(x) \rightarrow \infty$, $a(x)=o(x)$.
\end{lemma}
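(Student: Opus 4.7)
The plan is to reduce the statement to a direct application of Lemma \ref{lem.KP.2.1} together with the defining inequality \eqref{eq.KP.45} of the $GTAI$ structure.

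First, I would invoke Lemma \ref{lem.KP.2.1} to promote the $GTAI$ property from the primary variables $X_1,\ldots,X_n,Y_1,\ldots,Y_m$ to the weighted variables $\Theta_1 X_1,\ldots,\Theta_n X_n,\Delta_1 Y_1,\ldots,\Delta_m Y_m$. Since $\Theta_k\ge 0$ we have $|\Theta_k X_k|=\Theta_k|X_k|$, so applying \eqref{eq.KP.45} to the products (with the roles of $X_i,X_k,Y_j$ played by $\Theta_k X_k,\Theta_i X_i,\Delta_j Y_j$ respectively) gives: for every $\vep>0$ there exists $\kappa_0>0$ such that whenever $z\wedge u\wedge v>\kappa_0$,
\beao
\PP\bigl(\Theta_k|X_k|>z \;\bigm|\; \Theta_i X_i>u,\;\Delta_j Y_j>v\bigr)<\vep.
\eeao

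Second, since $a(x)\to\infty$ as $x\to\infty$, for $x\wedge y$ sufficiently large we have $a(x)\wedge x\wedge y>\kappa_0$. Choosing $z=a(x)$, $u=x$, $v=y$ and multiplying through by $\PP(\Theta_i X_i>x,\,\Delta_j Y_j>y)$ yields
\beao
\PP\bigl(\Theta_i X_i>x,\;\Delta_j Y_j>y,\;\Theta_k|X_k|>a(x)\bigr)\leq \vep\,\PP(\Theta_i X_i>x,\;\Delta_j Y_j>y),
\eeao
and the conclusion follows from the arbitrariness of $\vep$.

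The argument is essentially bookkeeping: there is no real analytic obstacle beyond matching the indices in the $GTAI$ definition to the present configuration, and checking that the multiplication step is legitimate. The latter requires the denominator to be eventually positive, which is ensured by the marginals $F_i,G_j\in\mathcal{D}\cap\mathcal{L}$ having infinite right endpoint, the non-degeneracy of $\Theta_i,\Delta_j$ at zero, their independence of the primary variables, and the $SAI$ relation when $i=j$ (resp.\ independence when $i\neq j$) for the pair $(X_i,Y_j)$. Thus the ratio form of the $o(\cdot)$ statement is well defined for $x\wedge y$ large, and the lemma is immediate.
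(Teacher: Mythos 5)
Your proposal is correct and follows essentially the same route as the paper: factor the joint probability as a conditional probability times $\PP(\Theta_i X_i>x,\,\Delta_j Y_j>y)$, then use Lemma \ref{lem.KP.2.1} (GTAI closure under bounded weights) together with the definition of GTAI to make the conditional factor $o(1)$. You spell out the $\vep$-$\kappa_0$ bookkeeping and the positivity of the denominator more explicitly than the paper does, but the underlying argument is identical.
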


\begin{proof}
	We can see that
	\beao
		&&\PP(\Theta_i\,X_i>x,\; \Delta_j\,Y_j>y,\; \Theta_k\,|X_k|>a(x))\\[2mm]
		&&=\PP(\Theta_{k}\,|X_{k}|>a(x)\;\mid\; \Theta_{i}\,X_{i}>x,\;\Delta_{j}\,Y_{j}>y)\,\PP(\Theta_{i}\,X_{i}>x,\;\Delta_{j}\,Y_{j}>y)\\[2mm]
		&&=o\left[ \PP(\Theta_{i}\,X_{i}>x,\; \Delta_{j}\,Y_{j}>y)\right]\,,
	\eeao
as $x\wedge y \rightarrow \infty$, where in the last step we used Assumption \ref{ass.KP.2} and Lemma \ref{lem.KP.2.1}.
\end{proof}

Now, we can present the first main result.
\begin{theorem} \label{th.KP.4}
	Under Assumption \ref{ass.KP.2} and Assumption \ref{ass.KP.3}, for every pair $(n,\,m)\in \bbn^{2}$ we obtain \eqref{eq.KP.44}, as $x\wedge y \rightarrow \infty$.
\end{theorem}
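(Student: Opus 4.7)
I would prove the theorem by matching asymptotic lower and upper bounds for $\PP(X_n(\Theta)>x,\,Y_m(\Delta)>y)$, both based on a single-big-jump decomposition. The three pillars are: the products $\Theta_iX_i$ and $\Delta_jY_j$ inherit membership in $\mathcal{D}\cap\mathcal{L}$ from their marginals (a Breiman-type argument using the bounded, non-degenerate, independent weights in Assumption~\ref{ass.KP.2}); Lemma~\ref{lem.KP.2.1}, which carries $GTAI$ to these products; and Lemma~\ref{lem.KP.1} (together with its symmetric $Y$-side version), which controls cross-terms featuring an extra large weighted primary variable. Throughout I fix common insensitivity functions $a(x)$ and $b(y)$ with $a(x),\,b(y)\to\infty$, $a(x)=o(x)$, $b(y)=o(y)$, simultaneously insensitive for all the product marginals.

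\medskip
\noindent\textbf{Lower bound.} For each pair $(i,j)$ define
\[
A_{ij}=\{\Theta_{i}X_{i}>x+na(x),\;\Delta_{j}Y_{j}>y+mb(y)\}\cap\bigcap_{k\neq i}\{\Theta_{k}|X_{k}|\leq a(x)\}\cap\bigcap_{\ell\neq j}\{\Delta_{\ell}|Y_{\ell}|\leq b(y)\}.
\]
On $A_{ij}$ one has $X_n(\Theta)>x$ and $Y_m(\Delta)>y$; the $A_{ij}$ are pairwise disjoint, since a large-value constraint on any shared differing index is incompatible with the corresponding small-value constraint in the other event. Hence $\PP(X_n(\Theta)>x,\,Y_m(\Delta)>y)\geq\sum_{i,j}\PP(A_{ij})$. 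Writing $\PP(A_{ij})=\PP(\Theta_iX_i>x+na(x),\,\Delta_jY_j>y+mb(y))-R_{ij}$, a union bound reduces $R_{ij}$ to a sum of probabilities of the form $\PP(\Theta_iX_i>x+na(x),\,\Delta_jY_j>y+mb(y),\,\Theta_k|X_k|>a(x))$ with $k\neq i$ or its $Y$-side counterpart, each $o(\PP(\Theta_iX_i>x,\,\Delta_jY_j>y))$ by Lemma~\ref{lem.KP.1}. The leading term is asymptotic to $\PP(\Theta_iX_i>x,\,\Delta_jY_j>y)$ via a joint long-tail shift, obtained by conditioning on $(\Theta_i,\Delta_j)$ and invoking dominated convergence with the marginal $\mathcal{L}$ property; for $i\neq j$ this uses the independence of $X_i$ from $Y_j$, while for $i=j$ it combines $SAI$ with $\mathcal{L}$.

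\medskip
\noindent\textbf{Upper bound.} Decompose $\{X_n(\Theta)>x\}\subseteq\bigcup_iG_i\cup B$, where
\[
G_i=\{\Theta_iX_i>x-(n-1)a(x)\}\cap\bigcap_{k\neq i}\{\Theta_k|X_k|\leq a(x)\},\qquad B=\bigcup_{i\neq k}\{\Theta_i|X_i|>a(x),\;\Theta_k|X_k|>a(x)\}.
\]
Indeed, outside $B$ at most one $\Theta_k|X_k|$ exceeds $a(x)$; the case ``none'' forces $\sum_k\Theta_kX_k\leq na(x)<x$ for large $x$, so exactly one $i$ survives, and $\sum_{k\neq i}\Theta_kX_k\geq-(n-1)a(x)$ yields $\Theta_iX_i>x-(n-1)a(x)$, i.e.\ membership in $G_i$. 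Make the symmetric decomposition into $G_j'$ and $B'$ on the $Y$-side. Then
\[
\PP(X_n(\Theta)>x,\,Y_m(\Delta)>y)\leq\sum_{i,j}\PP(G_i\cap G_j')+\sum_i\PP(G_i\cap B')+\sum_j\PP(B\cap G_j')+\PP(B\cap B').
\]
The main term obeys $\sum_{i,j}\PP(G_i\cap G_j')\leq\sum_{i,j}\PP(\Theta_iX_i>x-(n-1)a(x),\,\Delta_jY_j>y-(m-1)b(y))$, which is asymptotic to $\sum_{i,j}\PP(\Theta_iX_i>x,\,\Delta_jY_j>y)$ by the same joint long-tail shift. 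Each error term reduces, after union bounds and a preliminary size-pinning through $\{\sum\Theta X>x\}\subseteq\bigcup_i\{\Theta_iX_i>x/n\}$ (and its $Y$-analog) for the $B\cap B'$ piece, to probabilities of the form $\PP(\Theta_iX_i>\cdot,\,\Delta_jY_j>\cdot,\,\Theta_k|X_k|>a(x))$ or its $Y$-version, which Lemma~\ref{lem.KP.1} shows to be $o(\PP(\Theta_iX_i>x,\Delta_jY_j>y))$.

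\medskip
\noindent\textbf{Main obstacle.} The most delicate step is the upper-bound control of the ``multi-large'' pieces $\PP(G_i\cap B')$, $\PP(B\cap G_j')$ and especially $\PP(B\cap B')$: Lemma~\ref{lem.KP.1} is tailored to configurations with one truly tail-large $\Theta_iX_i>x$ and one $\Delta_jY_j>y$ plus an extra moderately-large $\Theta_k|X_k|>a(x)$, whereas in $B\cap B'$ the two candidate indices on each side are only guaranteed to exceed $a(x)$, not $x$. Bridging this gap requires trading size against probability via the Matuszewska-index bounds afforded by the $\mathcal{D}$ property (which tame the extra factors produced when one upgrades an $a(x)$-threshold to an $x/n$-threshold through the union-bound $\{\sum\Theta X>x\}\subseteq\bigcup_i\{\Theta_iX_i>x/n\}$), before Lemma~\ref{lem.KP.1} can be applied. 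A secondary technical point is the uniformity in the weights of the joint long-tail shift, handled by dominated convergence exploiting the bounded support and non-degeneracy of the weights.
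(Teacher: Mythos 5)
Your lower bound is sound and is a legitimate variant of the paper's argument (the paper instead intersects with the maxima events $A_x^{+},A_y^{+}$ and applies Bonferroni twice, but your disjoint single-big-jump events $A_{ij}$ achieve the same thing, with the remainder terms handled exactly as the paper does via Lemma \ref{lem.KP.1} and the joint long-tail shift). The genuine problem is in your upper bound: in the displayed inequality you bound the cross pieces by $\PP(G_i\cap B')$, $\PP(B\cap G_j')$ and $\PP(B\cap B')$ \emph{after discarding the events} $\{X_n(\Theta)>x\}$ and $\{Y_m(\Delta)>y\}$. Once these are gone, the event $G_i\cap B'$ carries no $y$-scale information at all: it only forces two $Y$-products above $b(y)=o(y)$. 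Consequently it is not $o$ of the main term. For instance, with all variables independent, regularly varying of index $\alpha$, unit weights and $b(y)=\log y$, one gets $\PP(G_i\cap B')\asymp \PP(\Theta_iX_i>x)\,\bigl(\PP(\Delta Y>\log y)\bigr)^2\asymp x^{-\alpha}(\log y)^{-2\alpha}$, which dominates the target $x^{-\alpha}y^{-\alpha}$. The reduction you claim, to probabilities of the form $\PP(\Theta_iX_i>\cdot,\,\Delta_jY_j>\cdot,\,\Theta_k|X_k|>a(x))$ with both leading thresholds at the $x$- and $y$-scales, is therefore unreachable from $\PP(G_i\cap B')$ as written; Lemma \ref{lem.KP.1} applied there only gives smallness relative to $\PP(\Theta_iX_i>x,\,\Delta_jY_j>b(y))$, which is useless.

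Your ``main obstacle'' paragraph half-identifies this, but the proposed bridge does not work: the size-pinning $\{\sum\Theta X>x\}\subseteq\bigcup_i\{\Theta_iX_i>x/n\}$ (and its $Y$-analog) can only be invoked if the sum events are still present inside the probability, and in your bound they have already been thrown away; moreover, the Matuszewska/$\mathcal{D}$ machinery only compares tails at proportional levels ($ty$ with $t$ fixed), so it cannot upgrade a $b(y)$-threshold, with $b(y)=o(y)$, into anything comparable with the $y$-level tail. The fix is exactly what the paper does: keep $\{X_n(\Theta)>x,\,Y_m(\Delta)>y\}$ intersected into every error piece (its $I_2$, $I_3$ retain both sum events together with $(A_x^-)^c$ or $(A_y^-)^c$), pin $\bigvee_i\Theta_iX_i>x/n$ and $\bigvee_j\Delta_jY_j>y/m$ from them, and only then union-bound; each resulting configuration has one $X$-product above $x/n$, one $Y$-product above $y/m$ and an extra product above $a(x)/n$, to which Lemma \ref{lem.KP.1} together with the $\mathcal{D}$-based comparison (the paper's appeal to Lemma 3(i) of Li (2018)) applies. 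With that repair your decomposition works, but as written the upper bound does not close.
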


\begin{proof}
Let us follow the line of \cite[Th. 1]{li:2018}. We consider the following events 
	\beao
		A_{x}^{\pm}:=\left\{ \bigvee_{i=1}^{n}\Theta_{i}X_{i}>x\pm a(x) \right\}\,, \qquad A_{y}^{\pm}:=\left\{ \bigvee_{j=1}^{m}\Delta_{j}Y_{j}>y\pm a(y) \right\}\,.
	\eeao
where $a(x)>0$, is such that $a(x) \rightarrow \infty$, $a(x)=o(x)$ and $a \in \mathcal{R}_{0}$. Now, let us define the probabilities 
	\beao
		I_{1}(x,\,y) &:=& \PP(X_n(\Theta)>x,\;Y_m(\Delta)>y,\;A_{x}^{-},\;A_{y}^{-})\,, \\[2mm]
		I_{2}(x,\,y) &:=& \PP(X_n(\Theta)>x,\;Y_m(\Delta)>y,\;(A_{x}^{-})^{c})\,, \\[2mm]
		I_{3}(x,\,y) &:=& \PP(X_n(\Theta)>x,\;Y_m(\Delta)>y,\;(A_{y}^{-})^{c})\,,
	\eeao
Hence, we can see that
\beam \label{eq.KP.55}
\PP(X_n(\Theta)>x,\;Y_m(\Delta)>y)\leq I_{1}(x,\,y)+I_{2}(x,\,y)+I_{3}(x,\,y)\,.
\eeam
Therefore, for the upper bound of the probability in the left hand side of \eqref{eq.KP.55}, it remains to estimate the $I_{1}(x,\,y),\,I_{2}(x,\,y),\,I_{3}(x,\,y)$.	
\beao
I_{1}(x,\,y) &\leq& \PP(A_{x}^{-},\,A_{y}^{-})=\PP\left[\bigvee_{i=1}^{n}\Theta_{i}\,X_{i}>x-a(x),\; \bigvee_{j=1}^{m}\Delta_{j}\,Y_{j}>y-a(y)\right]\\[2mm]
&\leq& \sum_{i=1}^{n}\sum_{j=1}^{m}\PP\left[\Theta_{i}\,X_{i}>x-a(x), \;\Delta_{j}\,Y_{j}>y-a(y)\right]\\[2mm]
&\sim& \sum_{i=1}^{n}\sum_{j=1}^{m}\PP(\Theta_{i}X_{i}>x,\,\Delta_{j}Y_{j}>y)\,,
\eeao
as $x\wedge y \rightarrow \infty$, where in the last step we used \cite[Lemma 3(ii)]{li:2018} (as far as, $X_i$ and $Y_i$ are SAI). Next
	\beao
		&&I_{2}(x,\,y) =\PP\left[X_n(\Theta)>x,\; Y_m(\Delta)>y,\;\bigvee_{i=1}^{n}\Theta_{i}\,X_{i}>\dfrac{x}{n},\;\bigvee_{j=1}^{m}\Delta_{j}\,Y_{j}>\dfrac{y}{m},\;(A_{x}^{-})^{c}\right] \\[2mm]  
		&&=\PP\left[X_n(\Theta)>x, \;Y_m(\Delta)>y,\;\bigvee_{i=1}^{n}\Theta_{i}\,X_{i}>\dfrac{x}{n},\;\bigvee_{j=1}^{m}\Delta_{j}\,Y_{j}>\dfrac{y}{m},\;\bigvee_{k=1}^{n}\Theta_{k}\,X_{k} \leq x-a(x) \right] \\[2mm]
		&&\leq\sum_{i=1}^{n}\sum_{j=1}^{m}\sum_{1=k \neq i}^{n}\PP\left[\Theta_{i}\,X_{i}>\dfrac{x}{n},\;\Delta_{j}\,Y_{j}>\dfrac{y}{m}, \;\Theta_{k}\,X_{k}>\dfrac{a(x)}{n}\right] \\[2mm] 
		&&=o\left(\sum_{i=1}^{n}\sum_{j=1}^{m}\PP(\Theta_{i}\,X_{i}>x,\;\Delta_{j}\,Y_{j}>y) \right)\,,
	\eeao
as $x\wedge y \rightarrow \infty$, where in the last step we used Lemma \ref{lem.KP.1}  and \cite[Lemma 3(i)]{li:2018}, due to $F,\,G \in \mathcal{D}\cap \mathcal{L} \subsetneq \mathcal{D}$.
	
Symmetrically, we find 
\beao
I_{3}(x,\,y) =o\left(\sum_{i=1}^{n}\sum_{j=1}^{m}\PP(\Theta_{i}\,X_{i}>x,\;\Delta_{j}\,Y_{j}>y) \right)\,,
\eeao
as $x\wedge y \rightarrow \infty$. So from \eqref{eq.KP.55} we obtain
\beao
\PP(X_n(\Theta)>x,\;Y_m(\Delta)>y)\lesssim \sum_{i=1}^{n}\sum_{j=1}^{m}\PP(\Theta_{i}\,X_{i}>x,\;\Delta_{j}\,Y_{j}>y)\,,
\eeao
as $x\wedge y \rightarrow \infty$. 
	
For the lower bound of $\PP(X_n(\Theta)>x,\;Y_m(\Delta)>y)$, we get the following inequality $		\PP(X_n(\Theta)>x,\; Y_m(\Delta)>y)\geq \PP(X_n(\Theta)>x, \;Y_m(\Delta)>y, \;A_{x}^{+}, \;A_{y}^{+})$. Applying Bonferroni inequality twice we obtain
\beam \label{eq.KP.59} 
&&\PP(X_n(\Theta)>x, \;Y_m(\Delta)>y, \;A_{x}^{+}, \;A_{y}^{+})\\[2mm] \notag
&&\geq \sum_{i=1}^{n}\sum_{j=1}^{m}\PP(X_n(\Theta)>x, \;Y_m(\Delta)>y, \;\Theta_{i}X_{i}>x+a(x), \;\Delta_{j}Y_{j}>y+a(y))  \\[2mm] \notag
&&\quad-\sum_{1\leq i<k\leq n}\sum_{j=1}^{m}\PP(\Theta_{i}X_{i}>x+a(x), \;\Theta_{k}X_{k}>x+a(x), \;\Delta_{j}Y_{j}>y+a(y))	\\[2mm] \notag
&&\quad-\sum_{i=1}^{n}\sum_{1\leq j<k\leq m}\PP(\Theta_{i}X_{i}>x+a(x),\;\Delta_{j}Y_{j}>y+a(y),\;\Delta_{k}Y_{k}>y+a(y))\,,
\eeam
and further, by \cite[Lem. 3 (ii)]{li:2018} and by Lemma \ref{lem.KP.2.1}, the last two terms in \eqref{eq.KP.59} are asymptotically negligible with respect to $\sum_{i=1}^{n}\sum_{j=1}^{m} \PP(\Theta_{i}\,X_{i}>x,\; \Delta_{j}\,Y_{j}>y)$, as $x\wedge y \rightarrow \infty$. For the first term of right hand side in \eqref{eq.KP.59} we find a lower bound 
\beao
 &&\sum_{i=1}^{n}\sum_{j=1}^{m} \PP(\Theta_{i}X_{i}>x+a(x), \;\Delta_{j}Y_{j}>y+a(y))
		\\[2mm]
		&&\quad -\sum_{i=1}^{n}\sum_{j=1}^{m}\,\PP\left[\Theta_{i}X_{i}>x+a(x),\;\Delta_{j}Y_{j}>y+a(y),\,\sum_{k=1}^n \Theta_{k}\,X_{k} \leq x \right] \\[2mm]
		&&\quad -\sum_{i=1}^{n}\sum_{j=1}^{m}\,\PP\left[\Theta_{i}X_{i}>x+a(x),\;\Delta_{j}Y_{j}>y+a(y),\;\sum_{k=1}^m\Delta_{k}Y_{k} \leq y \right]\\[2mm]
		&&\geq \sum_{i=1}^{n}\sum_{j=1}^{m}\PP(\Theta_{i}X_{i}>x+a(x), \;\Delta_{j}Y_{j}>y+a(y))
		\\[2mm]
		&&\quad -\sum_{i=1}^{n}\sum_{j=1}^{m}\sum_{ i\neq k=1 }^{n}\PP\left[\Theta_{i}X_{i}>x+a(x), \;\Delta_{j}Y_{j}>y+a(y), \;\Theta_{k}X_{k}<-\dfrac{a(x)}{n}\right] \\[2mm]
		&&\quad -\sum_{i=1}^{n}\sum_{j=1}^{m}\sum_{ j\neq k=1 }^{m}\PP\left[\Theta_{i}X_{i}>x+a(x), \;\Delta_{j}Y_{j}>y+a(y), \;\Delta_{k}Y_{k}<-\dfrac{a(y)}{m}\right]\,,
\eeao
which by Lemma \ref{lem.KP.1} and \cite[Lem. 3(ii)]{li:2018} is asymptotically greater than the double sum $\sum_{i=1}^{n}\sum_{j=1}^{m}\PP(\Theta_{i}X_{i}>x, \;\Delta_{j}Y_{j}>y)$, as $x\wedge y \rightarrow \infty$, and thus we have the asymptotic relation \eqref{eq.KP.44}.
\end{proof}

\begin{remark} \label{rem.KP.5,5}
	We proved asymptotic estimation \eqref{eq.KP.44}, when $X_1,\,\ldots,\,X_n,\,Y_1,\,\ldots,\,Y_m$ are GTAI, as it comes by Definition \ref{def.KP.6}, the $X_i$, $Y_i$ are SAI and the $\Theta_i,\; \Delta_i$ are upper bounded and non-negative  random variables. This restriction is relatively small compared to the extension made in terms of dependence and at the same time reasonable in these models, since $\Theta_i$ and $\Delta_j$ depicts the discount factors.
\end{remark}

Let us remind that, $C_i> 0$ is the constant from $SAI$ condition, on each pair $(X_i,\,Y_i)\,,\;i=1,\,\ldots,\,n\wedge m$, see Assumption \ref{ass.KP.3}. Now applying \cite[Lemma 3 (iii)]{li:2018} in Theorem \ref{th.KP.4} and we have the following consequence.

\begin{corollary} \label{cor.KP.6} 
Under the conditions of Theorem \ref{th.KP.4}, with the restriction that $F_{i}\in \mathcal{R}_{-\alpha_{i}}$ and $G_{j}\in \mathcal{R}_{-\alpha'_{j}}$, with $\alpha_{i},\;\alpha'_{j}\in[0,\,\infty)$,  for any $i \in \bbn$, $j\in \bbn$, then we obtain 	
\beam \label{eq.KP.31}
&&\PP(X_n(\Theta)>x, \;Y_m(\Delta)>y)\\[2mm] \notag
&&\qquad \sim\sum_{i=1}^{n}\sum_{1=j \neq i}^{m}\E\left[\Theta_i^{\alpha_i}\,\Delta_j^{\alpha'_j}\right]\bF_i(x)\,\bG_j(y)+\sum_{i=1}^{n\wedge m}C_i\,\E\left[\Theta_i^{\alpha_i}\,\Delta_i^{\alpha'_i}\right] \bF_i(x)\,\bG_i(y)\,,
\eeam
as $x\wedge y \rightarrow \infty$.	
\end{corollary}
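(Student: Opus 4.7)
The plan is to start from the max-sum equivalence in Theorem \ref{th.KP.4}, which reduces the problem to evaluating each term $\PP(\Theta_i\,X_i>x,\;\Delta_j\,Y_j>y)$ asymptotically under the regular variation hypothesis. Since each summand can be handled by splitting into the diagonal case $i=j$ and the off-diagonal case $i\neq j$, we only need product-type asymptotics on two separate scales.

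First I would handle the off-diagonal terms, i.e.\ $i\neq j$ (with $i\leq n$, $j\leq m$). By Assumption \ref{ass.KP.3}, $X_i$ and $Y_j$ are independent, and by Assumption \ref{ass.KP.2}, $(\Theta_i,\Delta_j)$ is independent of $(X_i,Y_j)$. Conditioning on $(\Theta_i,\Delta_j)$ I would write
\beao
\PP(\Theta_i\,X_i>x,\;\Delta_j\,Y_j>y)=\E\!\left[\bF_i\!\left(\tfrac{x}{\Theta_i}\right)\,\bG_j\!\left(\tfrac{y}{\Delta_j}\right)\right].
\eeao
Since $\Theta_i,\Delta_j$ are bounded, Potter-type bounds for $F_i\in\mathcal{R}_{-\alpha_i}$ and $G_j\in\mathcal{R}_{-\alpha'_j}$ allow a dominated-convergence passage, yielding
\beao
\PP(\Theta_i\,X_i>x,\;\Delta_j\,Y_j>y)\sim \E[\Theta_i^{\alpha_i}\,\Delta_j^{\alpha'_j}]\,\bF_i(x)\,\bG_j(y),
\eeao
as $x\wedge y\to\infty$. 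This is essentially the bivariate version of Breiman's theorem for the independent-marginals, dependent-weights set-up, and the boundedness of the weights makes all moment conditions automatic.

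Next I would treat the diagonal terms $i=j$ with $i\leq n\wedge m$. Here $(X_i,Y_i)$ is SAI with constant $C_i$, i.e.\ $\PP(X_i>x,\,Y_i>y)\sim C_i\bF_i(x)\bG_i(y)$, and $(\Theta_i,\Delta_i)$ is independent of $(X_i,Y_i)$. Conditioning again on $(\Theta_i,\Delta_i)$, the joint tail asymptotic of SAI combined with the regular variation of $F_i,G_i$ gives, via the same dominated-convergence argument (this is exactly \cite[Lemma 3(iii)]{li:2018}),
\beao
\PP(\Theta_i\,X_i>x,\;\Delta_i\,Y_i>y)\sim C_i\,\E[\Theta_i^{\alpha_i}\,\Delta_i^{\alpha'_i}]\,\bF_i(x)\,\bG_i(y),
\eeao
as $x\wedge y\to\infty$.

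Finally, substituting both asymptotic evaluations into the double sum in \eqref{eq.KP.44} and separating the terms with $i=j$ from those with $i\neq j$ yields \eqref{eq.KP.31}. The only step that is not a direct book-keeping exercise is the invocation of the joint Breiman-type asymptotic in the SAI case, but since this is already packaged as \cite[Lemma 3(iii)]{li:2018} and the weights have bounded support, no moment issue arises, so I would not expect a serious obstacle beyond careful application of that lemma.
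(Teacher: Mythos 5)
Your proposal is correct and follows essentially the same route as the paper: both start from the max-sum equivalence \eqref{eq.KP.44} of Theorem \ref{th.KP.4} and then evaluate each summand $\PP(\Theta_i X_i>x,\,\Delta_j Y_j>y)$ via the Breiman-type product asymptotic packaged as \cite[Lemma 3(iii)]{li:2018}, with the SAI constant appearing on the diagonal and the independence case (constant $1$) off the diagonal. The paper compresses this into a one-line citation, whereas you spell out the conditioning/dominated-convergence mechanics; the content is the same.
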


\begin{remark}\label{rem.KP.7}
We write $X^{\pm}_{n}(\Theta):=\sum_{i=1}^{n}\Theta_{i}X^{\pm}_{i}$, $Y^{\pm}_{m}(\Delta):=\sum_{j=1}^{m}\Delta_{j}Y^{\pm}_{j}$.
	We see that, the corresponding maximums for $X^{\pm}_{n}(\Theta)$ and $Y^{\pm}_{m}(\Delta)$, namely $\bigvee_{i=1}^{n}X_{i}(\Theta)$ and  $\bigvee_{j=1}^{m}Y_{j}(\Delta)$, satisfy the inequalities
	\beao
		\PP(X_{n}(\Theta)>x,\,Y_{m}(\Delta)>y)&\leq& \PP\left[\bigvee_{i=1}^{n}X_{i}(\Theta)>x,\;\bigvee_{j=1}^{m}Y_{j}(\Delta)>y\right]\\[2mm]
		&\leq& \PP(X^{+}_{n}(\Theta)>x,\;Y^{+}_{m}(\Delta)>y)\,,
	\eeao
Therefore Theorem \ref{th.KP.4} and Corollary \ref{cor.KP.6} are also satisfied, namely relations \eqref{eq.KP.44} and \eqref{eq.KP.31} after replacement of the pair $(X_n(\Theta),\,Y_m(\Delta))$, with $		\left(\bigvee_{i=1}^{n}X_{i}(\Theta),\;\bigvee_{j=1}^{m}Y_{j}(\Delta)\right)$. 
\end{remark}

Finally, we present one bi-dimensional discrete time risk model. Recently, the discrete time one-dimensional models have attracted attention by many researchers, see \cite{li:tang:2015}, \cite{yang:konstantinides:2015}, \cite{Yang:Leipus:Siaulys:2012(b)}. On the other hand more researchers study the multivariate risk models, because it is rarely for an insurance company to operate with one line of business, see \cite{cheng:yu:2019}, \cite{cheng:yu:2020b}, \cite{cheng:konstantinides:wang:2024b}, \cite{konstantinides:li:2016} among others.

We limit us in only two portfolios and discrete time, where $X_{i}$ and $Y_{i}$ depicts the net loss in $i$-th period, in the first and second line of business respectively, $\Theta_{i}$ and $\Delta_{i}$ they continue to be discount factors, of the $i$-th period.

Therefore, the stochastic surplus process of insurer at time $n\in \bbn$ is described by 
\beao
	\textbf{S}_{n}:=(S_{1n},\;S_{2n})=\left(x-\sum_{i=1}^{n}\Theta_{i}\,X_{i},\;y-\sum_{j=1}^{n}\Delta_{j}\,Y_{j}\right)\,,
\eeao 
where $x,y$ are initial capitals in each line of business, hence  one type of ruin probability is given as
\beao
	\psi(x,y,n):=\PP\left[\bigvee_{i=1}^{n}X_{i}(\Theta)>x,\;\bigvee_{j=1}^{n}Y_{j}(\Delta)>y\right]\,,
\eeao
for any $n\in \bbn$. This ruin probability depicts the probability that both portfolios have been with negative surplus during the $n$ first periods, but not necessarily simultaneously.

\begin{corollary}
	(i) Under the conditions of Theorem \ref{th.KP.4}, we obtain
\beao
\psi(x,y,n)\sim\sum_{i=1}^{n}\sum_{j=1}^{n}\PP(\Theta_{i}\,X_{i}>x,\;\Delta_{j}\,Y_{j}>y)\,,
\eeao
as $x\wedge y \rightarrow \infty$.
	
	(ii) Under the conditions of Corollary \ref{cor.KP.6} we obtain
\beao
\psi(x,y,n)&\sim& \PP(X_n(\Theta)>x, \;Y_n(\Delta)>y) \\[2mm] \notag
&\sim& \sum_{i=1}^{n}\sum_{1=j \neq i}^{n}\E\left[\Theta_{i}^{\alpha_{i}}\Delta_{j}^{\alpha'_{j}}\right]\bF_{i}(x)\bG_{j}(y)+\sum_{i=1}^{n}C_{i}\E\left[\Theta_{i}^{\alpha_{i}}\Delta_{i}^{\alpha'_{i}}\right]\,\bF_{i}(x)\,\bG_{i}(y)\,,
\eeao
as $x\wedge y \rightarrow \infty$.
\end{corollary}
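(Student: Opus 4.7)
The plan is to realise $\psi(x,y,n)$ as the common asymptotic value of a sandwich whose two ends are both handled by Theorem \ref{th.KP.4}. Remark \ref{rem.KP.7} gives, pointwise, $\PP(X_n(\Theta)>x,\,Y_n(\Delta)>y) \le \psi(x,y,n) \le \PP(X_n^{+}(\Theta)>x,\,Y_n^{+}(\Delta)>y)$, with $X_n^{+}(\Theta)=\sum_{i=1}^{n}\Theta_{i}X_{i}^{+}$ and $Y_n^{+}(\Delta)=\sum_{j=1}^{n}\Delta_{j}Y_{j}^{+}$. Applying Theorem \ref{th.KP.4} to the original variables with $m=n$ yields directly $\PP(X_n(\Theta)>x,\,Y_n(\Delta)>y)\sim \sum_{i=1}^{n}\sum_{j=1}^{n}\PP(\Theta_{i}X_{i}>x,\,\Delta_{j}Y_{j}>y)$, as $x\wedge y\to\infty$, which is the target right-hand side of part (i).

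For the upper bound I would verify that the positive-part family $\{X_{i}^{+},Y_{j}^{+}\}_{i,j=1}^{n}$ still satisfies Assumptions \ref{ass.KP.2} and \ref{ass.KP.3}. This is essentially definitional: every conditioning event in \eqref{eq.KP.45}--\eqref{eq.KP.46} and in the $SAI$ relation is imposed on a positive threshold, so $\{X_{k}^{+}>x_{k}\}=\{X_{k}>x_{k}\}$ for $x_{k}>0$, and the inclusion $\{X_{i}^{+}>x_{i}\}\subseteq\{|X_{i}|>x_{i}\}$ makes the $GTAI$ limits for $X_{i}^{+}$ no larger than those for $X_{i}$. The marginal tails agree on the positive axis, hence remain in $\mathcal{D}\cap\mathcal{L}$, and the $SAI$ constants $C_{i}$ between $X_{i},Y_{i}$ carry over unchanged to $X_{i}^{+},Y_{i}^{+}$. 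Thus Theorem \ref{th.KP.4} applies to the positive-part sequences as well, and since $\PP(\Theta_{i}X_{i}^{+}>x,\Delta_{j}Y_{j}^{+}>y)=\PP(\Theta_{i}X_{i}>x,\Delta_{j}Y_{j}>y)$ for $x,y>0$, the upper-bound asymptotic matches the lower-bound one. The sandwich collapses and part (i) follows.

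Part (ii) is then mechanical. The first equivalence $\psi(x,y,n)\sim\PP(X_n(\Theta)>x,\,Y_n(\Delta)>y)$ is obtained by combining part (i) with Theorem \ref{th.KP.4} once more. The second equivalence is exactly Corollary \ref{cor.KP.6} with $m=n$: Breiman's theorem evaluates $\PP(\Theta_{i}X_{i}>x,\Delta_{j}Y_{j}>y)$ as $\E[\Theta_{i}^{\alpha_{i}}\Delta_{j}^{\alpha'_{j}}]\bF_{i}(x)\bG_{j}(y)$ for the independent off-diagonal pairs $i\ne j$, and as $C_{i}\E[\Theta_{i}^{\alpha_{i}}\Delta_{i}^{\alpha'_{i}}]\bF_{i}(x)\bG_{i}(y)$ for the $SAI$ diagonal pairs $i=j$; splitting the double sum along the diagonal produces the displayed formula. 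The only nontrivial bookkeeping is the preservation of $GTAI$ and $SAI$ under $X_i\mapsto X_i^{+}$ described above; once that is noted, the corollary is a direct assembly of Remark \ref{rem.KP.7}, Theorem \ref{th.KP.4}, and Corollary \ref{cor.KP.6}.
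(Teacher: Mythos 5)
Your proposal is correct and follows the same route as the paper: the paper's one-line proof cites exactly the sandwich of Remark \ref{rem.KP.7} together with Theorem \ref{th.KP.4} and Corollary \ref{cor.KP.6}, which is what you assemble. Your explicit check that the positive-part variables $X_i^{+},Y_j^{+}$ inherit the $GTAI$/$SAI$ structure and the same product tails is precisely the detail left implicit in Remark \ref{rem.KP.7}, so it is a welcome but not divergent addition.
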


\begin{proof}
	Directly from Remark \ref{rem.KP.7}, the definition of ruin probability, Theorem \ref{th.KP.4} and Corollary \ref{cor.KP.6}, respectively.
\end{proof}

\section{Application on bi-dimensional renewal risk model} \label{sec.KP.3}

Recently, the bi-dimensional risk model gained more popularity. The reason is the improvement in modeling practical insurance issues, while in the same time provide a relatively flexible mathematical frame, in comparison with the multidimensional ones, see \cite{chen:wang:wang:2013}, \cite{jiang:wang:chen:xu:2015}, \cite{li:2018b}, \cite{sun:geng:wang:2021}, \cite{sun:yuan:lu:2024}, \cite{yang:li:2014}, etc. In all these papers, were studied several models, not necessarily renewal ones, and several dependence structures, either among claims of the two business lines, or between  times and claims, or both. However, we can not find interdependence among the two lines of business. Inspired by the simple and concise risk model, found in \cite{yang:li:2014} and \cite{li:2018b}, we attempt a partial extension using Theorem \ref{th.KP.4}.

The discounted bi-variate surplus process $(U_1(t),\,U_2(t))^T$, for $t\geq 0$ has the form
\beam \label{eq.KP.3.1}
\left( 
\begin{array}{c}
U_{1}(t) \\[2mm]
U_{2}(t) 
\end{array} 
\right) =\left( 
\begin{array}{c}
x \\[2mm] 
y 
\end{array} 
\right) +\left( 
\begin{array}{c}
\int_{0-}^{t}e^{-r\,s}\,C_1(ds) \\[3mm] 
\int_{0-}^{t}e^{-r\,s}\,C_2(ds) 
\end{array} 
\right) -\left( 
\begin{array}{c}
D^{(1)}_r(t) \\[2mm] 
D^{(2)}_r(t)  
\end{array} 
\right)\,,
\eeam
where $(x,\,y)$ is the vector of the initial capitals for the two lines of business, $r\geq 0$, is the constant interest rate, $D^{(1)}_r(t) := \sum_{i=1}^{N(t)} X_i\,e^{-r\,T_i}$, $D^{(2)}_r(t) := \sum_{j=1}^{N(t)} Y_j\,e^{-r\,T_j}$, are the discounted aggregate claims of each line, up to time $t\geq  0$, the $\{(C_1(t),\,C_2(t))\,,\; t\geq 0\}$ is the premiums accumulation process for the two business lines, which represent non-decreasing cadlag paths with $(C_1(0),\,C_2(0))=(0,\,0)$, and the $\{(X_i,\,Y_i)\,,\;i\in \bbn\}$ is the sequence of claims, which arrive at the time moments $\{T_i\,,\;i\in \bbn\}$, that represents a renewal counting process
\beao
N(t) = \sum_{i=1}^{\infty} {\bf 1}_{\{\tau_i \leq t\}}\,,
\eeao 
for any $t\geq 0$, with renewal mean 
\beao
\lambda(t) = \E[N(t)] = \sum_{i=i}^{\infty} \PP[T_i \leq t]\,.
\eeao 
Indeed, the $\{ N(t)\,,\;t\geq 0\}$ represents a homogeneous, renewal process, namely the $\{\theta_i\,,\;n \in \bbn\}$, with $\theta_1 = T_1$, $\theta_i=T_i-T_{i-1}$, for integer $i \geq 2$, representing the inter-arrival times between two successive arrival times, is a sequence of independent and identically distributed, positive random variables.

In risk model \eqref{eq.KP.3.1}, we can examine several kinds of ruin probability, over finite time horizon of length $T>0$, such that it satisfies $\lambda(T) > 0$. Let us define the ruin probability as follows
\beam \label{eq.KP.3.3}
\psi_*(x,\,y;\,T) := \PP[T_* \leq T\;|\;(U_1(0),\,U_2(0))=(x,\,y)]\,,
\eeam
where $*$ is either '$\max$' or '$\and$', and as $T_*$ we consider the 
\beam \label{eq.KP.3.4}
T_{\max}&:=& \inf\left\{t>0\;:\;\left[U_1(t) \vee  U_2(t)\right] < 0\right\}\,, \\[2mm] \notag
T_{\and}&:=& \inf\left\{t>0\;:\;\inf_{0\leq s \leq t} U_1(s) < 0\,, \;  \inf_{0\leq s \leq t} U_2(s) < 0\right\}\,. 
\eeam
By relations \eqref{eq.KP.3.3} and \eqref{eq.KP.3.4} we understand that $\psi_{\max}$ depicts the probability that both portfolios get simultaneously negative surplus in the interval $[0,\,T]$, while $\psi_{\and}$ depicts the  probability that both portfolios get negative surplus in the interval $[0,\,T]$, but not necessarily simultaneously. Hence we observe that
\beam \label{eq.KP.3.5}
\psi_{\max}(x,\,y;\,T) \leq \psi_{\and}(x,\,y;\,T)\,. 
\eeam
  
The following result represents a partial generalization of \cite[Th. 1.1]{li:2018b}. Although we consider more general dependence structures, as restrict the claim distribution class to $\mathcal{D} \cap \mathcal{L} \subsetneq \mathcal{S}$ and the convergence is as $x\wedge y \to \infty$ instead of $(x,\,y) \to (\infty,\,\infty)$.

We can observe that in this risk model, except the dependence structures appearing in the conditions on the claims, the two business lines are also dependent through the common renewal process. We note that $C>0$ is the common constant of $SAI$ property, for any $(X,\,Y)$, since these random pairs are identically distributed.

\bth \label{th.KP.3.1}
Let consider the bi-variate renewal risk model \eqref{eq.KP.3.1}, with $r \geq 0$. We assume that the $\{(X_i,\,Y_i)\,,\;i\in \bbn\}$, $\{(C_1(t),\,C_2(t))\,,\; t\geq 0\}$ and $\{N(t)\,,\; t\geq 0\}$ are mutually independent and the sequence of pairs $(X_i,\,Y_i)$ satisfy the conditions of Theorem \ref{th.KP.4} under the restriction that $(X_i,\,Y_i)$ are identically distributed random pairs, with marginal distribution $F$ and $G$, respectively. Then, for any finite $T>0$ such that $\lambda(T) > 0$, it holds 
\beao
\psi_{\max}(x,\,y;\,T) &\sim& \psi_{\and}(x,\,y;\,T) \\[2mm] \notag
&\sim& \iint_{s,\,t \geq 0,\,s+t \leq T} \left[\overline{F}(x\,e^{r\,(t+s)})\,\overline{G}(y\,e^{r\,t})+ \overline{F}(x\,e^{r\,t})\,\overline{G}(y\,e^{r\,(t+s)})\right]\,\lambda(ds)\,\lambda(dt) \\[2mm] \notag
&&\qquad \qquad + C\,\int_{0}^T \overline{F}(x\,e^{r\,t})\,\overline{G}(y\,e^{r\,t})\,\lambda(dt)=:\Delta(x,\,y;\,T)\,,
\eeao 
as $x \wedge y \to \infty$.
\ethe

Before presenting the proof of the main result, we need a preliminary lemma, that has it own merit, as it provides the joint tail of the discounted aggregate claims.

\ble \label{lem.KP.3.1}
Under the conditions of Theorem \ref{th.KP.3.1}, then it holds
\beam \label{eq.KP.3.7}
&&\PP[D^{(1)}_r(T)>x\,,\;D^{(2)}_r(T)>y]   \sim \Delta(x,\,y;\,T)\,,
\eeam
as $x\wedge y \to \infty$.
\ele

\pr~
For any $m \in \bbn$ and $x\wedge y \geq 0$, we obtain
\beam \label{eq.KP.3.8} \notag
&&\PP[D^{(1)}_r(T)>x\,,\;D^{(2)}_r(T)>y] = \PP\left[\sum_{i=1}^{N(T)} X_i\,e^{-r\,T_i} >x\,,\;\sum_{j=1}^{N(T)} Y_j\,e^{-r\,T_j} >y \right] \\[2mm]
&&=\left(\sum_{n=1}^{m} + \sum_{n=m+1}^{\infty}  \right)\,\PP\left[\sum_{i=1}^{n} X_i\,e^{-r\,T_i} >x\,,\;\sum_{j=1}^{n} Y_j\,e^{-r\,T_j} >y\,,\;N(T)=n \right]\\[2mm] \notag
&&=:I_1 (x,\,y;\,T) + I_2 (x,\,y;\,T)\,.
\eeam
For the second term, for any $p> J_F^+ \vee J_G^+$, by Markov's inequality, the $SAI$ dependence and the fact that $F,\,G \in \mathcal{D} \cap \mathcal{L} \subsetneq \mathcal{D}$, we can find some constant $L>0$ such that it holds
\beao 
&&I_2 (x,\,y;\,T) \leq \left(\sum_{n=m+1}^{\left\lfloor (x\wedge y)/L\right\rfloor} + \sum_{n=\left\lfloor (x\wedge y)/L\right\rfloor+1}^{\infty}  \right)\PP\left[\sum_{i=1}^{n} X_i>x ,\,\sum_{j=1}^{n} Y_j>y \right]\,\PP[N(T)=n]\\[2mm]
&&\leq \sum_{n=m+1}^{\left\lfloor (x\wedge y)/L\right\rfloor} n^2\,\PP\left[X_i >\dfrac xn\,,\; Y_j > \dfrac yn \right]\,\PP[N(T)=n] +\PP\left[N(T) > \dfrac {x\wedge y}L \right] \\[2mm]
&&\leq \sum_{n=m+1}^{\left\lfloor (x\wedge y)/L\right\rfloor} K\,n^{2(p+1)}\,\overline{F}\left(x\right)\,\overline{G}\left(y\right)\,\PP[N(T)=n] \\[2mm] 
&&\qquad + \left( \dfrac{x\wedge y}L\right)^{-2\,(p+1)}\,\E\left[[N(T)]^{2\,(p+1)} {\bf 1}_{\left\{ N(T) > \dfrac {x\wedge y}L \right\}} \right]\\[2mm] 
&&\lesssim K\,\overline{F}\left(x\right)\,\overline{G}\left(y\right)\,\E\left[[N(T)]^{2\,(p+1)} {\bf 1}_{\{ N(T)>m\}} \right]\,,
\eeao
as $x\wedge y \to \infty$, where the constant $K$ can be found in \cite[Lem. 3(i)]{li:2018}, and in last step we used that, since $F,\,G \in \mathcal{D}$ and $p> J_F^+ \vee J_G^+$, then follows that 
\beam \label{eq.KP.3.9} 
(x\wedge y)^{-2\,p} = o\left[\overline{F}\left(x\right)\,\overline{G}\left(y\right)\right]\,,
\eeam
as $x\wedge y \to \infty$. Indeed, we see that 
\beao
(x\wedge y)^{-p} \leq x^{-p} =o\left[\overline{F}(x)\right]\,, \qquad \qquad (x\wedge y)^{-p} \leq y^{-p}= o\left[\overline{G}(y)\right]\,,
\eeao 
as $x \to \infty$ and $y \to \infty$, respectively, from where we obtain \eqref{eq.KP.3.9}.

We observe that it holds
\beao
\int_0^{T} \PP\left[ X\,e^{-r t} >x,\; Y \,e^{-r t} >y \right]\,\lambda(dt)\sim C\int_0^{T} \bF\left(x\,e^{r t}\right)\,\bG\left(y\,e^{r t}\right)\lambda(dt)\,,
\eeao
because $T>0$ if finite. Furthermore,
\beam \label{eq.KP.3.10} \notag
\int_0^{T} \PP\left[ X\,e^{-r t} >x,\; Y \,e^{-r t} >y \right]\,\lambda(dt)&\geq& \PP\left[ X\,e^{-r T} >x,\; Y\,e^{-r T} >y \right]\,\lambda(T)\\[2mm]
&\asymp& \overline{F}(x)\,\overline{G}(y)\,\lambda(T)\,,
\eeam
as $x\wedge y \to \infty$, where in the last step we used again \cite[Lem. 3(i)]{li:2018}. So, we find out that
\beao
&&\lim_{m\to \infty} \limsup_{x\wedge y \to \infty} \dfrac{I_2(x,\,y;\,T)}{\int_0^{T} \PP\left[ X\,e^{-r t} >x,\; Y \,e^{-r t} >y \right]\,\lambda(dt)}\\[2mm]
&&\qquad \qquad \leq \dfrac {K}{M} \lim_{m\to \infty}\dfrac 1{\lambda(T)}\,\E\left[ [N(T)]^{2\,(p+1)} {\bf 1}_{\{ N(T)>m\}}\right]=0\,,
\eeao
where the constant $M>0$, stems from relation \eqref{eq.KP.3.10}, while in the last step we used \cite[Lem. 3.2]{tang:2007}. Therefore, it holds
\beam \label{eq.KP.3.11} 
\lim_{m\to \infty} \limsup_{x\wedge y \to \infty} \dfrac{I_2(x,\,y;\,T)}{C\,\int_0^{T}  \bF\left(x\,e^{r t}\right)\,\bG\left(y\,e^{r t}\right)\lambda(dt)}=0\,.
\eeam

Now, we estimate the $I_1 (x,\,y;\,T)$. At first, through the dominated convergence theorem, by Theorem \ref{th.KP.4}, because $0 \leq e^{-r\,T_i} \leq 1$, we obtain
\beam \label{eq.KP.3.12} \notag
&&\PP\left[\sum_{i=1}^{n} X_i e^{-r\,T_i} > x,\,\sum_{j=1}^{n} Y_j e^{-r\,T_j} > y ,\,N(T)=n \right]=\int \cdots \int_{\left\{ 
\begin{array}{c}
0\leq t_1 \leq \cdots \leq t_n \leq T\\ 
t_{n+1}>T 
\end{array} 
\right\} }\\[2mm] \notag
&& \times \PP\left[\sum_{i=1}^{n} X_i\,e^{-r\,t_i} > x\,,\;\sum_{j=1}^{n} Y_j\,e^{-r\,t_j} > y \right]\,\PP[T_1 \in dt_1,\,\ldots,\,T_{n+1}\in dt_{n+1}]\\[2mm]
&&\sim\int \cdots \int_{\left\{ 
\begin{array}{c}
0\leq t_1 \leq \cdots \leq t_n \leq T\\ 
t_{n+1}>T  
\end{array} 
\right\} } \left(\sum_{i=1}^{n} \sum_{j=1}^{n} \PP\left[ X_i\,e^{-r\,t_i} > x\,,\; Y_j\,e^{-r\,t_j} > y \right]\right)\\[2mm] \notag
&&\times \PP[T_1 \in dt_1,\,\ldots,\,T_{n+1}\in dt_{n+1}]\sim \sum_{i=1}^{n} \sum_{j=1}^{n} \PP\left[ X_i e^{-r\,T_i} > x,\;Y_j e^{-r\,T_j} > y,\;N(T)=n\right]\,,
\eeam
as $x\wedge y \to \infty$. Therefore, from relations \eqref{eq.KP.3.8} and \eqref{eq.KP.3.12} we conclude
\beam \label{eq.KP.3.13}
&&I_1 (x,\,y;\,T) \sim \sum_{n=1}^{m} \sum_{i=1}^{n} \sum_{j=1}^{n} \PP\left[ X_i\,e^{-r T_i} >x,\; Y_j \,e^{-r T_j} >y\,,\;N(T)=n \right]=\\[2mm] \notag
&&\left(\sum_{n=1}^{\infty} - \sum_{n=m+1}^{\infty}  \right)\,\sum_{i=1}^{n} \sum_{j=1}^{n} \PP\left[ X_i\,e^{-r T_i} >x,\; Y_j \,e^{-r T_j} >y\,,\;N(T)=n \right] =:\sum_{l=3}^4I_l (x,\,y;\,T)\,.
\eeam
Further we follow the line from \cite[Lem. 3.4]{yang:li:2014}, but for convenience we present here the full argument. For the first term we obtain
\beam \label{eq.KP.3.14}
&&I_3 (x,\,y;\,T) = \sum_{i=1}^{\infty} \sum_{n=i}^{\infty} \sum_{j=1}^{n} \PP\left[ X_i\,e^{-r T_i} >x,\; Y_j \,e^{-r T_j} >y\,,\;N(T)=n \right]\\[2mm] \notag
&&=\sum_{i=1}^{\infty} \sum_{n=i}^{\infty} \left(\sum_{j=1}^{i-1} +\sum_{j=i}+ \sum_{j=i+1}^{n}  \right)\,\PP\left[ X_i\,e^{-r T_i} >x,\; Y_j \,e^{-r T_j} >y\,,\;N(T)=n \right] \\[2mm] \notag
&&=\sum_{j=1}^{\infty}\sum_{i=j+1}^{\infty}\,\PP\left[ X_i\,e^{-r T_i} >x,\; Y_j \,e^{-r T_j} >y\,,\;T_i\leq T \right] \\[2mm] \notag
&&+\sum_{i=1}^{\infty}\,\PP\left[ X_i\,e^{-r T_i} >x,\; Y_i \,e^{-r T_i} >y\,,\;T_i\leq T \right] \\[2mm] \notag
&&+\sum_{i=1}^{\infty}\sum_{j=i+1}^{\infty}\,\PP\left[ X_i\,e^{-r T_i} >x,\; Y_j \,e^{-r T_j} >y\,,\;T_j\leq T \right] =:\sum_{k=1}^3 I_{3k} (x,\,y;\,T)\,.
\eeam
For the first term $I_{31} (x,\,y;\,T)$, taking into account that $\{N(t)\,,\; t\geq 0\}$ represents a homogeneous renewal process, then $(T_i - T_j)$ is independent of $T_j$ and furthermore $(T_i - T_j) \stackrel{d}{=} T_{i-j}$, where the equality here, means equal in distribution, hence we find
\beam \label{eq.KP.3.15} \notag
&&I_{31} (x,\,y;\,T) = \sum_{j=1}^{\infty} \sum_{i=j+1}^{\infty}\,\PP\left[ X_i\,e^{-r (T_j +(T_i-T_j))} >x,\; Y_j \,e^{-r T_j} >y\,,\;T_j+(T_i - T_j)\leq T \right] \\[2mm] \notag
&&=\sum_{j=1}^{\infty} \sum_{i=j+1}^{\infty}\,\iint_{s,t\geq 0, s+t\leq T}\PP\left[ X_i\,e^{-r (t +s)} >x,\; Y_j \,e^{-r t} >y\right]\,\PP[T_{i-j} \in ds]\,\PP[T_j \in dt] \\[2mm]
&&=\iint_{s,t\geq 0, s+t\leq T}\overline{F}(x\,e^{r (t +s)})\,\overline{G}(y\,e^{r \,t}) \,\lambda(ds)\,\lambda(dt) \,.
\eeam
By symmetry, we obtain
\beam \label{eq.KP.3.16} \notag
&&I_{33} (x,\,y;\,T) = \sum_{i=1}^{\infty} \sum_{j=i+1}^{\infty}\,\PP\left[ X_i\,e^{-r \,T_i} >x,\; Y_j \,e^{-r (T_i+(T_j-T_i))} >y\,,\;T_i+(T_j - T_i)\leq T \right] \\[2mm]
&&=\iint_{s,t\geq 0, s+t\leq T}\overline{F}(x\,e^{r\,t})\,\overline{G}(ye^{r (t +s)})\, \,\lambda(ds)\,\lambda(dt) \,.
\eeam
Finally, by the $SAI$ dependence between $X_i$ and $Y_i$, we have
\beam \label{eq.KP.3.17} \notag
I_{32} (x,\,y;\,T) &=& \sum_{i=1}^{\infty} \int_0^T \PP\left[ X_i\,e^{-r \,t} >x,\; Y_i \,e^{-r t} >y \right]\,\PP[T_i \in dt] \\[2mm]
&\sim& C\,\int_{0}^T \overline{F}(x\,e^{r \,t})\,\overline{G}(ye^{r t})\,\lambda(dt)=:C\,I(x,\,y,\,F,\,G,\,T) \,,
\eeam
as $x\wedge y \to \infty$. So, by relations \eqref{eq.KP.3.14} - \eqref{eq.KP.3.17} we find
\beam \label{eq.KP.3.18}
I_{3} (x,\,y;\,T) \sim \Delta(x,\,y;\,T)\,,
\eeam
as $x\wedge y \to \infty$.

Now, we estimate $I_{4} (x,\,y;\,T)$. It is easy to see that for sufficiently large $x \wedge y$, we get
\beao
&&I_4 (x,\,y;\,T) = \sum_{n=m+1}^{\infty} \sum_{i=1}^{n} \sum_{j=1}^{n} \PP\left[ X_i\,e^{-r\,T_i} >x,\; Y_j \,e^{-r\,T_j} >y\,,\;N(T)=n \right]\\[2mm] \notag
&&\leq \sum_{n=m+1}^{\infty} \sum_{i=1}^{n}\left( \sum_{i \neq j=1}^{n} +  \sum_{i=j=1}^n \right)\,\PP\left[ X_i\,e^{-r\,T_1} >x,\; Y_j \,e^{-r T_1} >y\,,\;T_n\leq T \right] = \sum_{n=m+1}^{\infty} \\[2mm] \notag
&&\sum_{i=1}^{n} \left(\sum_{i\neq j=1}^{n} + \sum_{i=j=1}^n \right)\,\int_0^T \PP\left[ X_i\,e^{-r\,t} >x,\; Y_j \,e^{-r\,t} >y\right] \,\PP[N(T-t) \geq n-1]\,\PP[T_1 \in dt]\\[2mm] \notag
&&\leq K\,\sum_{n=m+1}^{\infty}[n\,(n-1) +2\,n]\int_{0}^{T}\overline{F}(x\,e^{r \,t})\,\overline{G}(y\,e^{r\,t}) \,\PP[N(T-t) \geq n-1]\,\PP[T_1 \in dt] 
\\[2mm] \notag
&&\leq K\,\sum_{n=m+1}^{\infty}(n^2+n)\,\PP[N(T) \geq n-1]\,\int_{0}^{T}\overline{F}(x\,e^{r \,t})\,\overline{G}(y\,e^{r\,t}) \,\PP[T_1 \in dt]\,.
\eeao
where the constant $K>0$, comes from \cite[Lem. 3(i)]{li:2018}. Hence, for any large enough $m$ and $x\wedge y> 0$, we can find some small enough $\varepsilon>0$, such that $I_4 (x,\,y;\,T)\leq \varepsilon\,K\,I(x,\,y,\,F,\,G,\,T) \leq \varepsilon\,K\,I_3 (x,\,y;\,T)$ and letting $m$ tends  to infinity, we can take $\varepsilon \downarrow 0$. Thus by relations \eqref{eq.KP.3.13}, \eqref{eq.KP.3.18} and last asymptotic inequality we find 
\beam \label{eq.KP.3.20}
I_{1} (x,\,y;\,T) \sim I_{3} (x,\,y;\,T) \sim \Delta(x,\,y;\,T)\,,
\eeam
as $x\wedge y \to \infty$. Therefore, by  \eqref{eq.KP.3.8}, \eqref{eq.KP.3.11} and  \eqref{eq.KP.3.20}, we reach the relation  \eqref{eq.KP.3.7}.
~\halmos

{\bf Proof of Theorem \ref{th.KP.3.1}.}~
For the $\psi_{\and}$, by Lemma \ref{lem.KP.3.1} we find
\beam \label{eq.KP.3.21} \notag
\psi_{\and} (x,\,y;\,T) &=&\PP\left[\inf_{0<t\leq T}U^{(1)}(t) < 0\,, \inf_{0<t\leq T}U^{(2)}(t) < 0\right]\\[2mm]
&\leq& \PP\left[D_r^{(1)}(T) >x\,, \;D_r^{(2)}(T) >y\right] \sim  \Delta(x,\,y;\,T) \,,
\eeam
as $x \wedge y \to \infty$. From the other side, for the lower bound of $\psi_{\max}$, following the line from \cite[Th. 2.1]{yang:li:2014}, we obtain
\beao
\psi_{\max}(x,\,y;\,T) &=&\PP\left[\inf_{0<t\leq T} \{U^{(1)}(t) \vee U^{(2)}(t) \}< 0\right]\\[2mm]
&\geq& \PP\left[D_r^{(1)}(T) -\int_{0}^{T}e^{-r \,s}\,C_1(ds)> x\,,\;D_r^{(2)}(T) -\int_{0}^{T}e^{-r \,s}\,C_2(ds)> y\right] \\[2mm]
&=& \int_0^{\infty} \int_0^{\infty} \PP\left[ D_r^{(1)}(T) >x+u\,,\;D_r^{(2)}(T) >y+z\right]\,H(du,\,dz)\\[2mm]
&\sim& \int_0^{\infty} \int_0^{\infty} \Delta(x+u,\,y+z\,;\,T)\,H(du,\,dz)\,,
\eeao
as $x \wedge y \to \infty$, where by $H$ we denote the distribution of the 
\beao
\left(\int_{0}^{T}e^{-r \,s}\,C_1(ds)\,,\; \int_{0}^{T}e^{-r \,s}\,C_2(ds) \right)\,,
\eeao
and in last step we used Lemma \ref{lem.KP.3.1}. Therefore, due to the fact that $F,\,G \in \mathcal{D}\cap \mathcal{L} \subsetneq \mathcal{L}$, we obtain 
$\Delta(x+u,\,y+z\;\,T) \sim \Delta(x,\,y\;\,T)$, as $x \wedge y \to \infty$, which in combination of the last relation, by dominated convergence theorem we find
\beam \label{eq.KP.3.22} 
\psi_{\max} (x,\,y;\,T) \gtrsim \Delta(x,\,y;\,T) \,,
\eeam 
as $x \wedge y \to \infty$. From relations  \eqref{eq.KP.3.21}, \eqref{eq.KP.3.22} and \eqref{eq.KP.3.5} we get the desired result.
~\halmos

\section{Tail distortion risk measures} \label{sec.KP.5}

Let us remind the Hadamard product of two non-negative, $n$-variate random vectors ${\bf \Theta}$ and ${\bf X}$. Here, the vector ${\bf \Theta}=(\Theta_1\,\,\ldots,\,\Theta_n)$ has non-negative and non-degenerated to zero components and represents the systemic risk factors, while the non-negative random vector ${\bf X}=(X_1,\,\ldots,\,X_n)$ describe the losses of $n$ portfolios, namely, the random variable $X_i$ represents the loss of the $i$-th portfolio, over a concrete time horizon, with $i=1,\,\ldots,\,n$. Hence, the product ${\bf \Theta}\,{\bf X}$ corresponds to discounted claim of the $n$ portfolios over a concrete time horizon.

In order to make the model more realistic, we allocate the initial capital into $n$ lines of business, in general of different amounts. Thus we need non-random, positive weights $w_1,\,\ldots,\,w_n$, with $\sum_{i=1}^n w_i =1$. Then the discounted aggregate loss of portfolio is presented as
\beao
{\bf \Theta\,X(w)}:=\sum_{i=1}^n w_i\,\Theta_i \,X_i\,.
\eeao

This model called background risk model and it can describe the systemic risk. For more details about Background risk model see \cite{asimit:vernic:zitikis}, \cite{cote:genest} and \cite{tsanakas}. We intent to study a risk measure, called tail distortion risk measure, symbolically $TDRM$, in the background risk model ${\bf \Theta}\,{\bf X}({\bf w})$, under a condition, see Assumption \ref{ass.KP.4.1} below, which permits several forms of dependence, among the systemic risk factors ${\bf \Theta}$, among the losses ${\bf X}$ and between the ${\bf \Theta}$ and ${\bf X}$ simultaneously. Before the definitions we give a survey of the current literature on the topic. 

 In \cite{chen:wang:zhang:2022} was examined the distortion tail risk measure in a similar background risk model. In \cite{chen:wang:zhang:2022} are developed asymptotic results for the tail distortion risk measure $TDRM$ of the quantity $ \Theta\,{\bf X(w)}$ and the weights $ w_{i}$ for $i=1,\,\ldots,\,n$, defined as above, with $\Theta$ to be one-dimensional, and therefore is a common systemic risk factor for all business lines, and independent of ${\bf X}$. Also they assume left-continuous distortion function. In \cite{yang:liu:liu:2023} was studied the $TDRM$ for right-continuous distortion function in the $X_n(\Theta)$  model, where was permitted multivariate systemic risk factors. In this work was used either $MRV$ structure for the vector ${\bf X}$ or some general enough structures of asymptotic independence for its components (with regularly varying tails), which can provide a direct asymptotic expression. However, the vectors  ${\bf \Theta}$ and ${\bf X}$ are still independent. 

In this work we study the ${\bf \Theta}\,{\bf X}({\bf w})$ model, in which through Assumption \ref{ass.KP.4.1} is permitted arbitrary dependence among each vectors components and also is permitted dependence between ${\bf \Theta}$ and ${\bf X}$, that represents a mathematical generalization with obvious practical impact.

\begin{assumption} \label{ass.KP.4.1}
Let ${\bf \Theta}\,{\bf X} \in MRV(\alpha,\,V,\,\mu)$, for some $\alpha \in (0,\,\infty)$.
\end{assumption}

\bre \label{rem.KP.4.1}
Assumption \ref{ass.KP.4.1} is satisfied in many cases, where ${\bf X} \in MRV(\alpha,\,V,\,\mu^*)$, for some Radon measure $\mu^*$, through multivariate versions of Breiman's theorem. For example, under some moment conditions for the components of ${\bf \Theta}$, in \cite{basrak:davis:mikosch:2002} we find that in case of independent  ${\bf \Theta}$ and ${\bf X}$, the $MRV$ structure remains in the product ${\bf \Theta}\,{\bf X}$, with a new Random measure, but with same regular variation index and same normalization function. Later, in \cite{hult:samorodnitsky:2008} we meet a similar result, with respect to a predictable framework of ${\bf \Theta}$ and ${\bf X}$ and next in \cite{fougeres:mercadier:2012}, we have an extension of the result in dependent ${\bf \Theta}$ and ${\bf X}$. For the special case, where ${\bf \Theta}=\Theta\,{\bf 1}$, under a rather weak dependence structure between $\Theta$ and ${\bf X}$, suggested in \cite{li:2016}, we can find in \cite{cheng:konstantinides:wang:2024b} that the product $\Theta\,{\bf X}$ keeps the $MRV$ structure, with same regular variation index and same normalizing function. 
\ere

\bre \label{rem.KP.4.2}
An important property for the next proofs, is the closure one of $MRV$ class, with respect to linear combinations. Namely, if we have a non-negative random vector ${\bf X} \in MRV(\alpha,\,V,\,\mu^*)$, then the distribution of all the non-negative, non-degenerated to zero linear combinations $\sum_{i=1}^n l_i\,X_i$, belongs to class $\mathcal{R}_{-\alpha}$, with same index $\alpha$, see for example in \cite[Sec. 7.3.1]{resnick:2007}. Hence, under Assumption \ref{ass.KP.4.1} we obtain that ${\bf \Theta}\,{\bf X}({\bf w}) \in \mathcal{R}_{-\alpha}$.
\ere

Now in the classic one-dimensional Background risk model we study the asymptotic behavior of tail distortion risk measure which is more general than conditional tail expectation. The following class of measures was introduced in \cite{wang:1996}.
For a given non-decreasing function $g:[0,1] \rightarrow [0,1]$, such that $g(0)=0$, $g(1)=1$, and for any non-negative random variable $X$, with distribution $F$, the measure
\beao 
	\rho_g[X]=\int_{0}^{\infty}g\left[\bF(x)\right]\,dx\,,
\eeao
is called distorted risk measure and the function $g$ is called distortion function.
It is well-known that the $VaR$ and $CTE$ are distortion risk measures
\beao
VaR_p(Z):=\inf\{x\in \mathbb{R}\;:\;V(x)\geq p\}\,, \qquad CTE_p(Z) = \E[Z\;|\;Z>VaR_p(Z)]\,,
\eeao
for some $p \in (0,\,1)$, see for example in \cite{zhu:li:2012}. In last reference, in order to stress on the risk tail, was introduced by \cite{zhu:li:2012} the definition of $TDRM$ as follows.

\begin{definition} \label{def.KP.1.3}
Let $g\;:\;[0,1]\rightarrow[0,1]$ be non-decreasing, such that $g(0)=0$ and $g(1)=1$, then the tail distortion risk measure of a non-negative random variable $X$ with distribution $F$ is given by
\beao
	\rho_{g}[X\;|\;X>VaR_{p}(X)]=\int_{0}^{\infty}g\left[\bF_{X|X>VaR_{p}(X)}(x)\right]\,dx\,,
\eeao
where $\bF_{X|X>y}(x)=\PP(X>x\;|\;X>y)$. 
\end{definition} 

It is well-known that, the tail distortion risk measure of a continuous random variable $X$, is a distortion risk measure. Furthermore, if the distortion function is the identical function $g(x)=x$, then the  tail distortion risk measure coincides with the conditional tail expectation. 

From \cite{zhu:li:2012} we find that if $X\in \mathcal{R}_{-\alpha}$, with $\alpha>0$ and additionally
\beao
\int_{1}^{\infty}g\left( \dfrac 1{y^{\alpha-k}} \right)\,dy<\infty\,, 
\eeao
for some $0<k<\alpha$, then $\rho_{g}[X|X>VaR_{p}(X)] \sim C_{\alpha}(g)\,VaR_{p}(X)$, as $p\rightarrow 1$, where 
\beao
C_{\alpha}(g):=\int_{0}^{1}y^{-1/\alpha}\,g(dy)=1+\int_{1}^{\infty}g\left(\dfrac 1{y^{\alpha}} \right)\,dy\,. 
\eeao
This last result was shown for any distortion function, without continuity requirement. 

From \cite{dhaene:vanduffel:goovaerts:kaas:tang:2006} we find for the function $B_{X}$, defined as quantile of $1/\bF$,
\beam \label{eq.KP.1}
B_{X}(s):=\left(\dfrac{1}{\bF}\right)^{\leftarrow}(s)=F^{\leftarrow}\left(1-\dfrac{1}{s}\right)\,,
\eeam 
for any $s\geq 1$, and for some right-continuous distortion function $g$, holds the representation
\beam \label{eq.KP.2}
\rho_{g}[X\;|\;X>VaR_{p}(X)]=\int_{0}^{1} B_{X}\,\left(\dfrac{1}{y(1-p)}\right)\,g(dy)\,,
\eeam
for any non-negative random variable $X$ with distribution $F$.

It is clear that the previous asymptotic expressions in section \ref{sec.KP.5} depends on the distortion function only though the constant $C_{\alpha}(g)$, and therefore the quantiles of random variables remain clear of distortion. This helps in practical applications, since the only we need for the application of the model, when the distortion function is varying for the same risks, is the  calculation of the integral. 

Next, we study the background risk under Assumption \ref{ass.KP.4.1}. So, in this model, we allow dependence among the losses of the $n$ lines of business and dependence among the systemic risk factors with the losses.

\bth \label{th.KP.4.1}
Let the product ${\bf \Theta}\,{\bf X}$ satisfy Assumption \ref{ass.KP.4.1} and condition 
\beam \label{eq.KP.11,5}
\int_{1}^{\infty}g\left(\dfrac 1{y^{\alpha/(1+\zeta)}}\right)dy<\infty\,,
\eeam 
for some $\zeta>0$. Then it holds
\beam  \label{eq.KP.12}
\rho_{g}\left[ {\bf \Theta\,X(w)}\;\big|\; {\bf \Theta\,X(w)}> VaR_{p}\left({\bf \Theta\,X(w)} \right)\right] \sim C_{\alpha}(g)\,\dfrac{\gamma_{\bf w}^{1/\alpha} }{ \Gamma_\alpha}\,\sum_{i=1}^{n} VaR_{p}\left(\Theta_{i}X_{i}\right)\,,
\eeam
as $p\rightarrow 1$, with
\beam \label{eq.KP.13,5}
\gamma_{\bf w}:=\lim_{\xto} \dfrac{\PP({\bf \Theta\,X(w)}  >x )}{\PP\left[\sum_{i=1}^{n} \Theta_{i}X_{i} >x\right]}\,,\qquad \Gamma_\alpha:=\sum_{i=1}^{n}\gamma_{e_{i}}^{1/\alpha}\,.
\eeam
\ethe

\begin{proof}
From Assumption \ref{ass.KP.4.1} we obtain ${\bf \Theta\,X}\in MRV(\alpha,\,V,\,\mu) $, therefore ${\bf \Theta\,X(w)} \in \mathcal{R}_{-\alpha}$. From relation \eqref{eq.KP.2} and the fact that
\beam \label{eq.KP.13.7}
VaR_{p}\left( {\bf \Theta\,X(w)}\right)=B_{{\bf \Theta\,X(w)}}\left(\dfrac{1}{1-p}\right)\,,
\eeam
see \eqref{eq.KP.1}, we can show
\beam \label{eq.KP.13}
\dfrac{1}{ B_{{\bf \Theta\,X(w)}}\left(\dfrac{1}{1-p}\right)} \int_{0}^{1} B_{{\bf \Theta\,X(w)}}\left(\dfrac{1}{y(1-p)}\right)\,g(dy) \sim C_{\alpha}(g)\,,
\eeam
as $p\rightarrow 1$. Indeed, from the \cite{drees:1998}, \cite[Th. \,B.2.18]{dehaan:ferreira:2006}  we get that there exists a  $0< \tilde{p} <1$, which depends on $\zeta >0$, such that for $ \tilde{p} \leq p <1 $ and $ 0< y <1 $ holds
\beam \label{eq.KP.14} \notag
	&&\left| \dfrac{B_{{\bf \Theta\,X(w)}}\left(\dfrac{1}{y(1-p)}\right)-B_{{\bf \Theta\,X(w)}}\,\left(\dfrac{1}{1-p}\right)}{\dfrac 1{\alpha}\,B_{{\bf \Theta\,X(w)}}\,\left(\dfrac{1}{1-p}\right)}-\dfrac{y^{-1/\alpha}-1}{1/\alpha} \right|\\[2mm]  
	&&\qquad \qquad = \left| \dfrac{B_{{\bf \Theta\,X(w)}}\left(\dfrac{1}{y(1-p)}\right) }{\dfrac 1{\alpha}\,B_{{\bf \Theta\,X(w)}}\left(\dfrac{1}{1-p}\right)} - \dfrac{y^{-1/\alpha}}{1/\alpha} \right| \leq y^{-(1+\zeta)/\alpha}\,.
 \eeam
Further we have
\beam \label{eq.KP.15}
	\left| \dfrac{B_{{\bf \Theta\,X(w)}}\left(\dfrac{1}{y(1-p)}\right)}{\dfrac 1{\alpha}\,B_{{\bf \Theta\,X(w)}}\left(\dfrac{1}{1-p}\right)}\right| -\dfrac{y^{-1/\alpha}}{1/\alpha} \leq \left|\dfrac{B_{{\bf \Theta\,X(w)}}\left(\dfrac{1}{y(1-p)}\right)}{\dfrac 1{\alpha}\,B_{{\bf \Theta\,X(w)}}\left(\dfrac{1}{1-p}\right)}-\dfrac{y^{-1/\alpha}}{1/\alpha} \right|\,,
\eeam
Hence from relations \eqref{eq.KP.14} and  \eqref{eq.KP.15} we obtain
\beam \label{eq.KP.16}
\left| \dfrac{B_{{\bf \Theta\,X(w)}}\left(\dfrac{1}{y\,(1-p)}\right)}{B_{{\bf \Theta\,X(w)}}\left(\dfrac{1}{1-p}\right)} \right| \leq y^{-1/\alpha}+\dfrac{y^{-(1+\zeta)/\alpha}}{\alpha}\,.
\eeam
Since the integral in \eqref{eq.KP.11,5} converges for some $\zeta>0$, it follows 
\beao
\int_{0}^{1} y^{-1/\alpha}\,g(dy) \leq \int_{0}^{1} y^{-(1+\zeta)/\alpha}\,g(dy)< \infty \,,
\eeao 
whence we obtain
\beao
\int_{0}^{1} \left( y^{-1/\alpha}+\dfrac{y^{-(1+\zeta)/\alpha}}{\alpha}
\right)\,g(dy)<\infty\,.
\eeao
Therefore from \eqref{eq.KP.16} and by dominated convergence theorem, we find
\beao
\lim_{p\rightarrow 1}\int_{0}^{1} \dfrac{B_{{\bf \Theta\,X(w)}}\left(\dfrac{1}{y\,(1-p)}\right)}{B_{{\bf \Theta\,X(w)}}\left(\dfrac{1}{1-p}\right)}\,g(dy)&=& \int_{0}^{1}\lim_{p\rightarrow 1} \dfrac{B_{{\bf \Theta\,X(w)}}\,\left(\dfrac{1}{y\,(1-p)}\right)}{B_{{\bf \Theta\,X(w)}}\left(\dfrac{1}{1-p}\right)}\,g(dy)\\[2mm]
	&=&\int_{0}^{1}y^{-1/\alpha}\,g(dy)=C_{\alpha}(g) \,,
\eeao
where the pre-last equality follows from $B_{{\bf \Theta\,X(w)}}(\cdot) \in \mathcal{R}_{1/\alpha}$, see \eqref{eq.KP.1} and \cite[sec. 2.4]{resnick:2007}. Therefore, relation \eqref{eq.KP.13} is true. 

Furthermore, taking into account relation \eqref{eq.KP.13,5} we obtain 
\beam \label{eq.KP.17}
\lim_{p\uparrow 1}\dfrac{\sum_{i=1}^{n} VaR_{p}(\Theta_{i}X_{i})}{VaR_{p}\left({\bf \Theta\,X(w)}\right)} =\lim_{\xto}\left(\dfrac{\sum_{i=1}^{n} \PP[\Theta_{i}\,X_{i}>x]}{\PP\left[\sum_{i=1}^{n} w_i\,\Theta_{i}\,X_{i}>x\right]}\right)^{1/\alpha}= \dfrac{\Gamma_\alpha}{\gamma_{\bf w}^{1/\alpha}}\,.
\eeam
However, we also obtain
\beao 
\rho_{g}\left[{\bf \Theta\,X(w)} \;\big|\; {\bf \Theta\,X(w)}> VaR_{p}\left({\bf \Theta\,X(w)} \right) \right] = \int_{0}^{1}B_{{\bf \Theta\,X(w)}}\left(\dfrac{1}{y\,(1-p)}\right)\,g(dy)\,,
\eeao
while by \eqref{eq.KP.13.7} and \eqref{eq.KP.13} we find
\beao
\int_{0}^{1} B_{{\bf \Theta\,X(w)}}\left(\dfrac{1}{y\,(1-p)}\right)\,g(dy) \sim C_{\alpha}(g)\,VaR_{p}\left({\bf \Theta\,X(w)}\right)\,,
\eeao
as $p\rightarrow 1 $ and consequently from relation \eqref{eq.KP.17} and the last two expressions we find relation \eqref{eq.KP.12}.
\end{proof}

Next, we provide a simple corollary, which serves as an extension of the result from \cite[Theorem 3.1]{chen:wang:zhang:2022}, in case when the systemic risk factor is independent of the risk vector ${\bf X} \in MRV$.

\begin{corollary} \label{cor.KP.4.3}
Let ${\bf X} \in MRV(\alpha,\,V,\,\mu^*)$ and ${\bf \Theta}$ be independent of ${\bf X}$. We assume that there exists some $\vep>0$, such that $\E\left(\Theta_{i}^{\alpha + \vep}\right)<\infty$, for any $i=1,\,\ldots,\,n$. If the integral in \eqref{eq.KP.11,5} converges, then it holds 
\beam \label{eq.KP.18}
\rho_{g} \left[{\bf \Theta\,X(w)} \;\big|\; {\bf \Theta\,X(w)} > VaR_{p}\left({\bf \Theta\,X(w)} \right) \right]  \sim C_{\alpha}(g)\,\dfrac{\gamma_{\bf w}^{1/\alpha}}{\Gamma_\alpha} \sum_{i=1}^{n} \left[\E\left(\Theta_{i}^{\alpha}\right)\right]^{1/\alpha}\,VaR_{p}(X_{i}) \,,
\eeam
as  $p\rightarrow 1$.
\end{corollary}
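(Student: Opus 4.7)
The plan is to derive Corollary~\ref{cor.KP.4.3} by reducing it to Theorem~\ref{th.KP.4.1} in two stages. First, I would verify that Assumption~\ref{ass.KP.4.1} is satisfied under the present independence and moment hypotheses, by invoking a multivariate version of Breiman's theorem. Then, after applying Theorem~\ref{th.KP.4.1}, I would translate the $VaR_{p}(\Theta_{i}\,X_{i})$ terms appearing on the right hand side into $VaR_{p}(X_{i})$ terms through the classical one-dimensional Breiman theorem, which produces the factor $[\E(\Theta_{i}^{\alpha})]^{1/\alpha}$.

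For the first stage, since ${\bf X}\in MRV(\alpha,\,V,\,\mu^{*})$, the vector ${\bf \Theta}$ is independent of ${\bf X}$, and $\E(\Theta_{i}^{\alpha+\vep})<\infty$ for every $i$, the multivariate Breiman result of Basrak--Davis--Mikosch discussed in Remark~\ref{rem.KP.4.1} ensures that ${\bf \Theta}\,{\bf X}\in MRV(\alpha,\,V,\,\mu_{L})$ for some new Radon measure $\mu_{L}$, keeping the same regular variation index $\alpha$ and the same normalizing function $V$. Hence Assumption~\ref{ass.KP.4.1} holds, the limits $\gamma_{\bf w}$ and $\gamma_{e_{i}}$ exist thanks to the $MRV$ structure of the product, and Theorem~\ref{th.KP.4.1} supplies
\beao
\rho_{g}\left[{\bf \Theta\,X(w)} \;\big|\; {\bf \Theta\,X(w)}>VaR_{p}({\bf \Theta\,X(w)})\right] \sim C_{\alpha}(g)\,\dfrac{\gamma_{\bf w}^{1/\alpha}}{\Gamma_{\alpha}}\,\sum_{i=1}^{n}VaR_{p}(\Theta_{i}\,X_{i})\,,
\eeao
as $p\rightarrow 1$.

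For the second stage, each marginal $X_{i}$ has distribution $F_{i}\in \mathcal{R}_{-\alpha}$, as a one-dimensional marginal of the $MRV$ vector ${\bf X}$, and $\Theta_{i}$ is independent of $X_{i}$ with the required moment. The classical Breiman theorem recalled in the Introduction then gives $\PP(\Theta_{i}\,X_{i}>x)\sim \E(\Theta_{i}^{\alpha})\,\bF_{i}(x)$, so that $\Theta_{i}\,X_{i}\in \mathcal{R}_{-\alpha}$. Inverting this tail equivalence through the quantile representation $B_{Z}(s)=F^{\leftarrow}(1-1/s)\in \mathcal{R}_{1/\alpha}$ from \eqref{eq.KP.1}, together with the identity $VaR_{p}(Z)=B_{Z}(1/(1-p))$, yields $VaR_{p}(\Theta_{i}\,X_{i})\sim [\E(\Theta_{i}^{\alpha})]^{1/\alpha}\,VaR_{p}(X_{i})$ as $p\rightarrow 1$. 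Substituting this into the previous display and passing the asymptotic equivalence through the finite sum produces \eqref{eq.KP.18}. The only delicate point is this tail-to-quantile conversion, but since both tails lie in $\mathcal{R}_{-\alpha}$ and differ only by the strictly positive constant $\E(\Theta_{i}^{\alpha})$, it follows from the standard inversion properties of generalized inverses of regularly varying functions.
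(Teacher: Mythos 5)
Your proposal is correct and follows essentially the same route as the paper: multivariate Breiman (Basrak--Davis--Mikosch) to verify Assumption \ref{ass.KP.4.1} and invoke Theorem \ref{th.KP.4.1}, then the marginal asymptotics $VaR_{p}(\Theta_{i}X_{i})\sim[\E(\Theta_{i}^{\alpha})]^{1/\alpha}VaR_{p}(X_{i})$ to obtain \eqref{eq.KP.18}. The only difference is that you derive this last quantile equivalence directly from the one-dimensional Breiman theorem and inversion of regularly varying tails, whereas the paper simply cites \cite{chen:gao:gao:zhang:2018} for it.
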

	
\begin{proof}
Firstly, we have ${\bf \Theta}\,{\bf X}\in MRV(\alpha,\,V,\,\mu) $ from \cite{basrak:davis:mikosch:2002}. Hence, by Theorem \ref{th.KP.4.1} we obtain \eqref{eq.KP.12}. Because $\Theta_{i}$ and $X_{i}$ are independent, from \cite{chen:gao:gao:zhang:2018} we find 
\beam \label{eq.KP.19}
VaR_{p}(\Theta_{i}\,X_{i})\sim \left[\E\left(\Theta_{i}^{\alpha}\right) \right]^{1/\alpha} \,VaR_{p}(X_{i})\,,
\eeam
as $p\rightarrow 1$. Hence from relation \eqref{eq.KP.19} and relation \eqref{eq.KP.12} we obtain \eqref{eq.KP.18}.
\end{proof}

\section{Conclusion} \label{sec.KP.6}

In the corresponding literature, we find a confusion with respect to term multivariate heavy tails. The reason for this, is that the multivariate distribution tail can not be determined uniquely. Having in mind the single big jump principle, the $d$-dimensional, non-weighted form of \eqref{eq.KP.44} is focused on the following joint distribution tail
\beam \label{eq.KP.6.1}
\PP\left[ \sum_{i=1}^{n_1} X_i^{(1)} >x_1,\,\ldots,\, \sum_{i=1}^{n_d} X_i^{(d)} >x_d \right]\sim \sum_{i=1}^{n_1} \cdots \sum_{i=1}^{n_d}\PP\left[  X_i^{(1)} >x_1,\,\ldots,\,  X_i^{(d)} >x_d \right]\,,
\eeam
as $\wedge_{i=1}^d x_i\to \infty$. Along with approach of the multivariate subexponentiality by \cite{samorodnitsky:sun:2016}, we find in \cite{konstantinides:passalidis:2024g}, under different conditions the relation
\beam \label{eq.KP.6.2}
\PP\left[ \sum_{i=1}^{n} {\bf X}^{(i)} \in x\,A \right]\sim \sum_{i=1}^{n} \PP\left[ {\bf X}^{(i)} \in x\,A \right]\,,
\eeam
as $\xto$, where ${\bf X}^{(1)},\,\ldots,\,{\bf X}^{(n)}$ represent $d$-dimensional random vectors, and $A$ is some rare set. Although the single big jump approximation in \eqref{eq.KP.6.2} has several good properties, as for example that most of the closure properties of the uni-variate distribution classes hold also for the multivariate ones and further some results can be generalized to multidimensional set up, still there exist some basic drawbacks. 

The main drawback is that the set $A$ does not represent the joint distribution tail, see in \cite[Rem. 2.2]{konstantinides:passalidis:2024g}. This leads to a 'linear' approximation of the single big jump, in the sense that in order to happen this representation, should be the from $1$ up to $d$, from the total $n\times d$, random variables, sufficiently 'large'. As consequence, the approximation of relation \eqref{eq.KP.6.2} can not represent the probabilities $\psi_{\max}$ and $\psi_{\and}$, belonging among the four most popular ruin probabilities in multivariate risk models.

In opposite direction, relation \eqref{eq.KP.6.1}, studied in section 2 with $d=2$, can solve the problem with these two ruin probabilities, since it is focused on the joint distribution tail. Furthermore, we can say that relation \eqref{eq.KP.6.1}, follows a 'non-linear' approach of the single big jump, since requires $d$ sufficiently 'large' random variables, namely one big jump for each line of business, if you look from the risk theory aspect. In this sense, the two approached by \eqref{eq.KP.6.1} and \eqref{eq.KP.6.2}, work in complementary modes.    

Finally, we want to mention that the $MRV$, although can give solutions with respect to '$\psi_{\max}$' and '$\psi_{and}$', satisfies only the linear single big jump, namely relation \eqref{eq.KP.6.2}. From this point of view the study of \eqref{eq.KP.6.1} leads to revision of multivariate heavy-tailed distributions, even of the well-established ones, like the $MRV$. The events described by relation \eqref{eq.KP.6.2} do NOT put emphasis in the dimension, in opposite with the events described by relation \eqref{eq.KP.6.1}. 

\vspace{2mm}
\noindent \textbf{Acknowledgments.} 
We feel the need to express our gratitude to two anonymous referees, for their crucial remarks, that improved the present paper.

\end{document}